\documentclass{article}%
\usepackage{amsmath}
\usepackage{amssymb}
\usepackage{amsfonts}
\usepackage{graphicx}%
\setcounter{MaxMatrixCols}{30}
\providecommand{\U}[1]{\protect\rule{.1in}{.1in}}
\newtheorem{theorem}{Theorem}[section]
\newtheorem{acknowledgement}[theorem]{Acknowledgement}

\newtheorem{corollary}[theorem]{Corollary}

\newtheorem{definition}[theorem]{Definition}
\newtheorem{example}[theorem]{Example}

\newtheorem{proposition}[theorem]{Proposition}
\newtheorem{remark}[theorem]{Remark}

\newenvironment{proof}[1][Proof]{\noindent\textbf{#1.} }{\ \rule{0.5em}{0.5em}}
\begin{document}

\title{The multiplicative ideal theory of Leavitt path algebras of directed graphs
\ - a survey}
\author{Kulumani M. Rangaswamy\\Department of Mathematics, University of Colorado\\Colorado Springs, Colorado 80918}
\date{}
\maketitle

\begin{abstract}
Let $L$ be the Leavitt path algebra of an arbitrary directed graph $E$ over a
field $K$. This survey article describes how this highly non-commutative ring
$L$ shares a number of the characterizing properties of a Dedekind domain or a
Pr\"{u}fer domain expressed in terms of their ideal lattices. Special types of
ideals such as the prime, the primary, the irreducible and the radical ideals
of $L$ are described by means of the graphical properties of $E$. The
existence and the uniqueness of the factorization of a non-zero ideal of $L$
as an irredundant product of prime or primary or irreducible ideals is
established. Such factorization always exists for every ideal in $L$ if the
graph $E$ is finite or if $L$ is two-sided artinian or two-sided noetherian.
In all these factorizations, the graded ideals of $L$ seem to play an
important role. Necessary and sufficient conditions are given under which $L$
is a generalized ZPI ring, that is, when every ideal of $L$ is a product of
prime ideals. Intersections of various special types of ideals are
investigated and an anlogue of Krull's theorem on the intersection of powers
of an ideal in $L$ is established.

\end{abstract}

\section{Introduction}

Leavitt path algebras of directed graphs are algebraic analogues of graph
C*-algebras and, ever since they were introduced in 2004, have become an
active area of research. Every Leavitt path algebra $L:=L_{K}(E)$ of a
directed graph $E$ over a field $K$ is equipped with three mutually compatible
structures: $L$ is an associative $K$-algebra, $L$ is a $%
\mathbb{Z}
$-graded algebra and $L$ is an algebra with an involution $^{\ast}$. Further,
$L$ possesses a large supply of idempotents, but it is highly non-commutative.
Indeed, in most of the cases, the center of this $K$-algebra is trivial, being
just the field $K$. In spite of this, it is somewhat intriguing and certainly
interesting that the ideals of such a non-commutative algebra $L$ exhibit the
behavior of the ideals of a Pr\"{u}fer domain and sometimes that of a Dedekind
domain, thus making the multiplicative ideal theory of these algebras $L$
worth investigating. The purpose of this survey is to give a detailed account
of some of these properties of $L$ and the resulting factorizations of its
ideals. To start with, the ideal multiplication \ in $L$ is commutative:
$AB=BA$ for any two ideals $A,B$ of $L$. As we shall see, the
Prufer-domain-like properties of $L$ lead to satisfactory factorizations of
ideals of $L$ as products of prime, primary or irreducible ideals. The graded
ideals of $L$ seem to possess interesting properties such as, coinciding with
their own radical, being realizable as Leavitt path algebras of suitable
graphs, possessing local units and many others. They play an important role in
the factorization of non-graded ideals of $L$. As noted in (\cite{AAS},
Theorem 2.8.10 and in \cite{R2}), the two-sided ideal structure of $L$ can be
described completely in terms of the hereditary saturated subsets and breaking
vertices and cycles without exits in the graph $E$ and irreducible polynomials
in $K[x,x^{-1}]$, and the association preserves the lattice structures. This
fact facilitates the description various factorization properties of the
two-sided ideals in $L$.

This paper is organized as follows. After the Preliminaries, Section 3
describes the various properties of the graded ideals of $L$ which are
foundational to the study of non-graded ideals and in the factorization of
ideals in $L$. In Section 4, $L$ is shown to be an arithmetical ring, that is,
its ideal lattice is distributive and, as a consequence, the Chinese Remainder
Theorem holds in $L$. In addition, $L$ is shown to be a multiplication ring.
The ideal version of the number-theoretic theorem $\gcd(m,n)\cdot
\operatorname{lcm}(m,n)=mn$ for positive integers $m,n$ holds in $L$, namely,
for any two ideals $M,N$ in $L$, $(M\cap N)(M+N)=MN$, again a characterizing
property of Pr\"{u}fer domains. In the next section, the prime, the primary,
the irreducible and the radical ideals of $L$ are described in terms of the
graph properties of $E$. It is interesting to note that for a graded ideal $I$
of $L$ the first three of these properties coincide and that $I$ is always a
radical ideal. In Section 6, we consider the existence and the uniqueness of
factorizations of a non-zero ideal $I$ as a product of prime, primary or
irreducible ideals of $L$. It is shown that if $E$ is a finite graph or more
generally, if $L$ is two-sided noetherian or artinian, then every ideal of $L$
is a product of prime ideals. This leads to a complete characterization of $L$
as a generalized ZPI ring, that is, a ring in which every ideal of $L$ is a
product of prime ideals. Finally, an analogue of the Krull's theorem on powers
of an ideal is proved for Leavitt path algebras. The results of this paper
indicate the potential for successful utilization of the ideas and results
from the ideal theory of commutative rings in the deeper study of the ideal
theory of Leavitt path algebras (of course using different techniques, as $L$
is non-commutative, and using the graphical properties of $E$ and the nature
of the graded ideals of $L$).

\section{Preliminaries}

For the general notation, terminology and results in Leavitt path algebras, we
refer to \cite{AAS}, \cite{R1} and \cite{T} , and for those in graded rings,
we refer to \cite{H}, \cite{NO}. We refer to \cite{FHL} - \cite{HK} for
results in commutative rings. Below we give an outline of some of the needed
basic concepts and results.

A (directed) graph $E=(E^{0},E^{1},r,s)$ consists of two sets $E^{0}$ and
$E^{1}$ together with maps $r,s:E^{1}\rightarrow E^{0}$. The elements of
$E^{0}$ are called \textit{vertices} and the elements of $E^{1}$
\textit{edges}. For each $e\in E^{1}$, say,%
\[%
\begin{array}
[c]{ccc}%
\bullet_{s(e)} & \overset{e}{\longrightarrow} & \bullet_{r(e)}%
\end{array}
\]
$s(e)$ is called the \textbf{source} of $e$ and $r(e)$ the \textbf{range} of
$e$. If $\overset{u}{\bullet}$ $\overset{e}{\longrightarrow}$
$\overset{v}{\bullet}$ is an edge, then $\overset{u}{\bullet}$
$\overset{e^{\ast}}{\longleftarrow}$ $\overset{v}{\bullet}$ denotes the
\textbf{ghost edge} $e^{\ast}$ with $s(e^{\ast})=v$ and $r(e^{\ast})=u$.

A vertex $v$ is called a \textbf{sink }if it emits no edges and a vertex $v$
is called a \textbf{regular vertex} if it emits a non-empty finite set of
edges. An \textbf{infinite emitter} is a vertex which emits infinitely many edges.

A \textbf{path} $\mu$ of length $n$ is a sequences of edges $\mu$ $=e_{1}\dots
e_{n}$ where $r(e_{i})=s(e_{i+1})$ for all $i=1,\cdot\cdot\cdot,n-1$. $|\mu|$
denotes the length of $\mu$. The path $\mu$ $=e_{1}\dots e_{n}$ in $E$ is
\textbf{closed} if $r(e_{n})=s(e_{1})$, in which case $\mu$ is said to be
\textit{based at the vertex }$s(e_{1})$. A closed path $\mu$ as above is
called \textbf{simple} provided it does not pass through its base more than
once, i.e., $s(e_{i})\neq s(e_{1})$ for all $i=2,...,n$. The closed path $\mu$
is called a \textbf{cycle} if it does not pass through any of its vertices
twice, that is, if $s(e_{i})\neq s(e_{j})$ for every $i\neq j$.

An \textit{exit }for a path $\mu=e_{1}\dots e_{n}$ is an edge $e$ such that
$s(e)=s(e_{i})$ for some $i$ and $e\neq e_{i}$.

If there is a path from vertex $u$ to a vertex $v$, we write $u\geq v$. A
subset $D$ of vertices is said to be \textbf{downward directed }\ if for any
$u,v\in D$, there exists a $w\in D$ such that $u\geq w$ and $v\geq w$. A
subset $H$ of $E^{0}$ is called\textbf{\ hereditary} if, whenever $v\in H$ and
$w\in E^{0}$ satisfy $v\geq w$, then $w\in H$. A hereditary set is
\textbf{saturated }if, for any regular vertex $v$, $r(s^{-1}(v))\subseteq H$
implies $v\in H$.

\begin{definition}
Given an arbitrary graph $E$ and a field $K$, the \textit{Leavitt path algebra
}$L_{K}(E)$ is defined to be the $K$-algebra generated by a set $\{v:v\in
E^{0}\}$ of pair-wise orthogonal idempotents, together with a set of variables
$\{e,e^{\ast}:e\in E^{1}\}$ which satisfy the following conditions:

(1) \ $s(e)e=e=er(e)$ for all $e\in E^{1}$.

(2) $r(e)e^{\ast}=e^{\ast}=e^{\ast}s(e)$\ for all $e\in E^{1}$.

(3) (The "CK-1 relations") For all $e,f\in E^{1}$, $e^{\ast}e=r(e)$ and
$e^{\ast}f=0$ if $e\neq f$.

(4) (The "CK-2 relations") For every regular vertex $v\in E^{0}$,
\[
v=\sum_{e\in E^{1},s(e)=v}ee^{\ast}.
\]

\end{definition}

Note that $L$ need not have an identity. Indeed, $L$ will have the identity
$1$ exactly when the vertex set $E^{0}$ is finite and in that case $1=%
{\displaystyle\sum\limits_{v\in E^{0}}}
v$. However, $L$ possesses \textbf{local units}, namely, given any finite set
of elements elements $a_{1},\cdot\cdot\cdot,a_{n}\in L$, there is an
idempotent $u$ such that $ua_{i}=a_{i}=a_{i}u$ for all $i=1,\cdot\cdot\cdot
,n$. Every element $a\in L:=L_{K}(E)$ can be written as $a=$ $%
{\displaystyle\sum\limits_{i=1}^{n}}
k_{i}\alpha_{i}\beta_{i}^{\ast}$ where $\alpha_{i},\beta_{i}$ are paths and
$k_{i}\in K$. Here $r(\alpha_{i})=s(\beta_{i}^{\ast})=r(\beta_{i})$. From
this, it is easy to see that $L=%
{\displaystyle\bigoplus\limits_{u\in E^{0}}}
Lu$.

Many well-known examples of rings occur as Leavitt path algebras.

\begin{example}
The Leavitt path algebra of the straight line graph $E:$
\[
\bullet_{v_{1}}\overset{e_{1}}{\longrightarrow}\bullet_{v_{2}}\overset{e_{2}%
}{\longrightarrow}\cdot\cdot\cdot\overset{e_{n-1}}{\longrightarrow}%
\bullet_{v_{n}}%
\]
is isomorphic to the matrix ring $M_{n}(K)$.
\end{example}

(Indeed, if $p_{1}=e_{1}\cdot\cdot\cdot e_{n-1}$, $p_{2}=e_{2}\cdot\cdot\cdot
e_{n-1}$, $\cdot\cdot\cdot$ , $p_{n-1}=e_{n-1}$, $p_{n}=v_{n}$, then
$\{\epsilon_{ij}=p_{i}p_{j}^{\ast}:1\leq i,j\leq n\}$ is a set of matrix
units, that is, $\epsilon_{ii}^{2}=\epsilon_{ii}$ and $\epsilon_{ij}%
\epsilon_{jk}=\epsilon_{ik}$. Then $\epsilon_{ij}\longmapsto E_{ij}$ induces
the isomorphism, where $E_{ij}$ is the $n\times n$ matrix with $1$ at $(i,j)$
position and $0$ everywhere else. )

\bigskip

\begin{example}
\label{Laurent} If $E$ is the graph with a single vertex and a single loop
\[%
\raisebox{-0pt}{\includegraphics[
natheight=0.396100in,
natwidth=0.989300in,
height=0.4255in,
width=1.0222in
]%
{Loop.jpg}%
}
\]
then $L_{K}(E)\cong K[x,x^{-1}]$, the Laurent polynomial ring, induced by the
map $v\mapsto1$, $x\mapsto x$, $x^{\ast}\mapsto x^{-1}$.
\end{example}

The defining relations of a Leavitt path algebra $L_{K}(E)$ show that it is a
non-commutative ring. Indeed if $e$ is an edge in $E$, say,
$\overset{u}{\bullet}$ $\overset{e}{\longrightarrow}$ $\overset{v}{\bullet}$
$\ $\ where $u\neq v$, then by defining relation (1), $ue=e$, but
$eu=evu=e(vu)=0$. The following Proposition describes when $L_{K}(E)$
\ becomes a commutative ring.

\begin{proposition}
Let $E$ be a connected graph. Then the Leavitt path algebra $L_{K}(E)$ is
commutative if and only if either $E$ consists of just a single vertex
$\{\bullet\}$ or $E$ is the graph with a single vertex and a single loop as in
Example 2 above. In this case $L_{K}(E)$ $\cong K$ or $K[x,x^{-1}]$.
\end{proposition}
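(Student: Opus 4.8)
The plan is to prove the two implications separately, disposing of the easy direction first. For the ``if'' direction, a single vertex with no edges gives $L_K(E)=Kv\cong K$, while Example \ref{Laurent} already identifies $L_K(E)\cong K[x,x^{-1}]$ when $E$ is a single vertex with a single loop; both rings are commutative, so this direction requires nothing further.

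For the ``only if'' direction I would argue contrapositively in two stages, first bounding the number of vertices and then the number of edges. Assume $L_K(E)$ is commutative. If $E^{0}$ contained two distinct vertices, then, $E$ being connected, its underlying undirected graph is connected, so there would exist an edge $e\in E^{1}$ with $s(e)\neq r(e)$. Writing $u=s(e)$ and $v=r(e)$, relation (1) gives $ue=e$, whereas
\[
eu=e\,r(e)\,u=e(vu)=0
\]
because the distinct vertices $u,v$ are orthogonal idempotents. Since $e\neq 0$ (the generating vertices and edges are nonzero in any Leavitt path algebra), this contradicts $ue=eu$. Hence $E$ has exactly one vertex $v$, and every edge is a loop based at $v$.

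It remains to show that $v$ carries at most one loop. Suppose instead that $e,f\in E^{1}$ are distinct loops at $v$. The CK-1 relations give $f^{\ast}e=0$, so commutativity forces $ef^{\ast}=f^{\ast}e=0$. Multiplying on the right by $f$ and using $f^{\ast}f=r(f)=v$ together with $ev=e$ yields
\[
0=(ef^{\ast})f=e(f^{\ast}f)=ev=e,
\]
contradicting $e\neq 0$. This shows $E$ has a single vertex supporting either no loop or exactly one loop, which is precisely the stated dichotomy, and the concrete isomorphisms then come from the ``if'' direction.

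The main obstacle, and the one point deserving care, is ruling out two loops. A naive route through the CK-2 relation would give $v=\sum_{i}e_{i}e_{i}^{\ast}=mv$, which only produces a contradiction when $m-1$ is invertible in $K$ and, more seriously, says nothing when $v$ is an infinite emitter. The manipulation $ef^{\ast}=0\Rightarrow e=0$ avoids both difficulties, since it invokes only relations (1)--(3) and the nonvanishing of the generators, and so applies uniformly whether $v$ is a regular vertex or an infinite emitter. Connectedness, finally, enters only to exclude disjoint unions of isolated vertices or loops, which would otherwise be further commutative examples.
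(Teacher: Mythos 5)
Your proof is correct. The paper states this proposition without proof, but the paragraph immediately preceding it contains exactly your first computation ($ue=e$ while $eu=e(vu)=0$ for an edge between distinct vertices); your completion of the argument --- ruling out two loops via $ef^{\ast}=f^{\ast}e=0$ and then $e=e(f^{\ast}f)=(ef^{\ast})f=0$ --- is valid, correctly uses only relations (1)--(3) together with the nonvanishing of the generators, and rightly avoids the CK-2 pitfall you mention (which would fail for infinite emitters and for special characteristics).
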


Every Leavitt path algebra $L_{K}(E)$ is a $%
\mathbb{Z}
$\textbf{-graded algebra}, namely, $L_{K}(E)=%
{\displaystyle\bigoplus\limits_{n\in\mathbb{Z}}}
L_{n}$ induced by defining, for all $v\in E^{0}$ and $e\in E^{1}$, $\deg
(v)=0$, $\deg(e)=1$, $\deg(e^{\ast})=-1$. Here the $L_{n}$, called
\textbf{homogeneous components}, are abelian subgroups satisfying $L_{m}%
L_{n}\subseteq L_{m+n}$ for all $m,n\in%
\mathbb{Z}
$. Further, for each $n\in%
\mathbb{Z}
$, the subgroup $L_{n}$ is given by
\[
L_{n}=\{%
{\textstyle\sum}
k_{i}\alpha_{i}\beta_{i}^{\ast}\in L:\text{ }|\alpha_{i}|-|\beta_{i}|=n\}.
\]
An ideal $I$ of $L_{K}(E)$ is said to be a \textbf{graded ideal} if $I=$ $%
{\displaystyle\bigoplus\limits_{n\in\mathbb{Z}}}
(I\cap L_{n})$. If $I$ is a non-graded ideal, then $%
{\displaystyle\bigoplus\limits_{n\in\mathbb{Z}}}
(I\cap L_{n})$ is the largest graded ideal contained in $I$ \ and is called
the \textbf{graded part} of $I$, denoted by $gr(A)$.

We will also be using the fact that the Jacobson radical (and in particular,
the prime/Baer radical) of $L_{K}(E)$ is always zero (see \cite{AAS}).

Let $\Lambda$ be an arbitrary non-empty (possibly, infinite) index set. For
any ring $R$, we denote by $M_{\Lambda}(R)$ the ring of matrices over $R$
whose entries are indexed by $\Lambda\times\Lambda$ and whose entries, except
for possibly a finite number, are all zero. It follows from the works in
\cite{AM} that $M_{\Lambda}(R)$ is Morita equivalent to $R$.

\textbf{Throughout this paper }$L$\textbf{ will denote the Leavitt path
algebra }$L_{K}(E)$\textbf{ of an arbitrary directed graph }$E$\textbf{ over a
field }$K$\textbf{.}

\section{Graded ideals of a Leavitt path algebra}

In this section, we shall describe some of the salient properties of the
graded ideals of a Leavitt path algebra $L$. As we shall see in a later
section, these properties impact the factorization of ideals of $L$. Every
ideal of $L$, whether graded or not, is shown to possess an orthogonal set of
generators. As a consequence, we get the interesting property that every
finitely generated ideal of $L$ is a principal ideal. It is interesting to
note that if $I$ is a graded ideal of $L$, then both $I$ and $L/I$ can be
realized as Leavitt path algebras of suitable graphs.

Suppose $H$ is a hereditary saturated subset of vertices. A \textbf{breaking
vertex }of $H$ is an infinite emitter $w\in E^{0}\backslash H$ with the
property that $0<|s^{-1}(w)\cap r^{-1}(E^{0}\backslash H)|<\infty$. The set of
all breaking vertices of $H$ is denoted by $B_{H}$. For any $v\in B_{H}$,
$v^{H}$ denotes the element $v-\sum_{s(e)=v,r(e)\notin H}ee^{\ast}$. The
following theorem of Tomforde describes graded ideals of $L$ by means of their generators.

\begin{theorem}
(\cite{T}) \label{graded Ideal} Suppose $H$ is a hereditary saturated set of
vertices and $S$ is a subset of $B_{H}$. Then the ideal $I(H,S)$ generated by
the set of idempotents $H\cup\{v^{H}:v\in S\}$ is a graded ideal of $L$ and
conversely, every graded ideal $I$ of $L$ is of the form $I(H,S)$ where
$H=I\cap E^{0}$ and $S=\{u\in B_{H}:u^{H}\in I\}$.
\end{theorem}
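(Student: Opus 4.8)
The plan is to prove both directions of the characterization. For the forward direction, I would first show that $I(H,S)$ is graded. The key observation is that each generator is homogeneous of degree $0$: every $v \in H$ is a vertex, hence lies in $L_0$, and each $v^H = v - \sum_{s(e)=v,\, r(e)\notin H} ee^\ast$ is a finite sum of degree-$0$ elements (since $\deg(ee^\ast)=0$), so $v^H \in L_0$ as well. The crucial fact is that an ideal generated by homogeneous elements is automatically a graded ideal; this is a standard result in the theory of $\mathbb{Z}$-graded rings (see \cite{H}, \cite{NO}), and I would invoke it directly. Thus $I(H,S)$, being generated by homogeneous idempotents, is graded.

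For the converse, I would start with an arbitrary graded ideal $I$ and set $H = I \cap E^0$ and $S = \{u \in B_H : u^H \in I\}$. The first task is to verify that $H$ is hereditary and saturated. Hereditariness follows because if $v \in H$ and $v \geq w$ via a path $\mu$, then $w = \mu^\ast v \mu \cdot(\text{appropriate correction})$ lies in $I$ using relation $e^\ast e = r(e)$ together with the ideal property; more carefully, one pulls the range vertex of a path out using the CK-1 relations to show $r(\mu) = \mu^\ast \mu \in I$. Saturation uses the CK-2 relation: if $v$ is regular and all $r(e)$ for $s(e)=v$ lie in $H \subseteq I$, then $v = \sum_{s(e)=v} ee^\ast = \sum_{s(e)=v} e \, r(e)\, e^\ast \in I$, so $v \in H$. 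One then checks $S \subseteq B_H$ is well-defined by the definition of $v^H$.

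The heart of the argument, and the main obstacle, is establishing the \emph{equality} $I = I(H,S)$, and in particular the nontrivial inclusion $I \subseteq I(H,S)$. The inclusion $I(H,S) \subseteq I$ is immediate since all the generators $H \cup \{v^H : v \in S\}$ lie in $I$ by construction. For the reverse inclusion, I would use the assumption that $I$ is graded to reduce to showing that every homogeneous element of $I$ lies in $I(H,S)$. Writing a homogeneous element as a $K$-linear combination $\sum k_i \alpha_i \beta_i^\ast$ with $|\alpha_i| - |\beta_i|$ fixed, the strategy is to multiply on the left and right by suitable vertices and ghost paths to extract, from each monomial, either a vertex lying in $H$ or a breaking-vertex element $v^H$. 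This normalization step — reducing an arbitrary graded element to a combination of the designated generators by manipulating paths via the CK-1 and CK-2 relations and tracking which vertices are forced into $H$ — is the technically delicate part, and it is exactly where the infinite-emitter and breaking-vertex subtleties enter (one must carefully distinguish when a vertex contributes to $H$ versus when only the truncated idempotent $v^H$ is available). Since this is Tomforde's theorem quoted from \cite{T}, I would at this point defer to the detailed monomial bookkeeping in that reference rather than reproduce it in full.
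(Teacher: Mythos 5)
Your outline is correct and is consistent with the paper, which offers no proof of its own here but simply quotes the result from Tomforde's paper \cite{T}. The forward direction is complete as you state it: $v$ and $v^{H}$ are homogeneous of degree $0$, and an ideal generated by homogeneous elements in a $\mathbb{Z}$-graded ring is graded. Your verification that $H=I\cap E^{0}$ is hereditary and saturated is also right (and needs no ``correction'' factor: if $\mu$ is a path from $v$ to $w$ then $\mu^{\ast}v\mu=\mu^{\ast}\mu=r(\mu)=w$ directly). The only substantive remark concerns the step you defer: the inclusion $I\subseteq I(H,S)$ is indeed the heart of the matter, and while Tomforde's original argument does proceed by the kind of monomial normalization you describe, the now-standard and cleaner route (see \cite{AAS}, Theorem 2.5.8) is to observe that $L/I(H,S)\cong L_{K}(E\backslash(H,S))$ and that the induced graded surjection $L_{K}(E\backslash(H,S))\rightarrow L/I$ kills no vertex of the quotient graph (precisely because $H$ and $S$ were chosen maximally from $I$), hence is injective by the Graded Uniqueness Theorem, forcing $I=I(H,S)$. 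Either mechanism is legitimate; since the paper itself only cites the reference, your deferral of that step is acceptable, but be aware that the graded hypothesis on $I$ enters exactly there --- for a non-graded ideal the pair $(H,S)$ does not determine $I$, as the paper's subsequent theorem on generators of arbitrary ideals makes clear.
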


Given a pair $(H,S)$ where $H$ is a hereditary saturated set of vertices in
the graph $E$ and $S$ is a subset of $B_{H}$, one could construct the
\textbf{Quotient graph} $E\backslash(H,S)$ given by $(E\backslash
(H,S))^{0}=E^{0}\backslash H\cup\{u^{\prime}:u\in B_{H}\backslash S\}$,
$(E\backslash(H,S))^{1}=\{e\in E^{1}:r(e)\notin H\}\cup\{e^{\prime}:e\in
E^{1}$ with $r(e)\in B_{H}\backslash S\}$ and $r,s$ are extended to
$(E\backslash(H,S))^{0}$ by setting $s(e^{\prime})=s(e)$ and $r(e^{\prime
})=r(e)^{\prime}$.

The next theorem describes a generating set \ $Y$ for a not necessarily-graded
non-zero ideal of $L$ . This set $Y$ is actually an orthogonal set of generators.

\begin{theorem}
(\cite{R2}) Let $E$ be an arbitrary graph and let $I$ be an arbitrary non-zero
ideal of $L=L_{K}(E)$ with $H=I\cap E^{0}$ and $S=\{u\in B_{H}:u^{H}\in I\}$.
Then $I$ is generated by the set%
\[
Y=H\cup\{v^{H}:v\in S\}\cup\{f_{t}(c_{t}):t\in T\}
\]
where $T$ is some index set (which may be empty), for each $t\in T$, $c_{t}$
is a cycle without exits in $E\backslash(H,S)$, no vertex $v\in S$ lies on any
cycle $c_{t}$, $t\in T$ and $f_{t}(x)\in K[x]$ is a polynomial with a non-zero
constant term and is of the smallest degree such that $f_{t}(c_{t})\in I$. Any
two elements $x\neq y$ in $Y$ are orthogonal, that is, $xy=0=yx$.
\end{theorem}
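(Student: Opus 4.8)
The plan is to peel off the graded part of $I$, transfer the problem to a quotient Leavitt path algebra in which the residual ideal has trivial graded part, read off the polynomial generators attached to cycles without exits there, and finally verify orthogonality by localizing each generator at a single vertex. First I would record that every vertex and every element $v^{H}$ is homogeneous of degree $0$, so the data $H=I\cap E^{0}$ and $S=\{u\in B_{H}:u^{H}\in I\}$ are unchanged when $I$ is replaced by its graded part; hence $H=gr(I)\cap E^{0}$ and $S=\{u\in B_{H}:u^{H}\in gr(I)\}$, and Theorem \ref{graded Ideal} gives $gr(I)=I(H,S)$, generated by $H\cup\{v^{H}:v\in S\}$. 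Thus the first two families in $Y$ generate exactly the largest graded ideal inside $I$, and only $I/gr(I)$ remains to be captured. I would then pass to $\bar{L}:=L/gr(I)$, which by the realizability of $L/I(H,S)$ as a Leavitt path algebra is isomorphic to $L_{K}(E\backslash(H,S))$, and write $\bar{I}=I/gr(I)$. By maximality of $gr(I)$ the ideal $\bar{I}$ has zero graded part; in particular no vertex of $E\backslash(H,S)$ lies in $\bar{I}$, since such a vertex would lift to a vertex of $E$ belonging to $I$ but not to $H$.

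For generation I would apply the Reduction Theorem for Leavitt path algebras (\cite{AAS}): every nonzero $\bar{a}\in\bar{I}$ can be multiplied on the two sides by suitable paths to yield either a nonzero scalar multiple of a vertex or an element $p(\bar{c})$, where $\bar{c}$ is a cycle without exits and $p$ a nonzero Laurent polynomial. The vertex alternative is excluded since $\bar{I}$ meets no vertex, so every nonzero element of $\bar{I}$ produces a nonzero polynomial relation on some cycle without exits. Because the corner $w\bar{L}w$ at a vertex $w$ of such a cycle $c$ is isomorphic to the Laurent ring $K[x,x^{-1}]$ of Example \ref{Laurent} (with $c\leftrightarrow x$), a principal ideal domain, the trace $\bar{I}\cap w\bar{L}w$ is principal; after multiplying by a unit $x^{k}$ its generator becomes an ordinary polynomial $f_{t}$ with nonzero constant term, which I take of least degree with $f_{t}(c_{t})\in I$. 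Running over the cycles without exits that $\bar{I}$ meets, indexed by $T$, the elements $\overline{f_{t}(c_{t})}$ generate $\bar{I}$, and lifting back shows that $Y$ generates $I$. The main obstacle is precisely the bookkeeping on cycles: one must show the polynomial data can be recorded on cycles without exits that are mutually vertex-disjoint and disjoint from $S$. The disjointness of distinct cycles without exits is forced because each vertex on such a cycle emits a unique edge; the delicate point is the disjointness from $S$, where the infinite-emitter nature of a breaking vertex and the relation $v^{H}\in gr(I)$ must be used to rule out or reassign polynomial data carried by cycles meeting $S$, and this is where I expect the real work to lie.

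Finally, for orthogonality I would exhibit, for each $y\in Y$, a single vertex $w(y)$ with $y=w(y)\,y\,w(y)$: a vertex of $H$ is its own idempotent; for $v\in S$ the identities $s(e)e=e$ and $e^{\ast}s(e)=e^{\ast}$ give $v\,v^{H}v=v^{H}$; and since $c_{t}$ is a closed path based at a vertex $w_{t}$, one has $f_{t}(c_{t})=w_{t}f_{t}(c_{t})w_{t}$. These localizing vertices are pairwise distinct: distinct vertices are orthogonal idempotents, a breaking vertex in $S$ lies outside $H$, a cycle base $w_{t}$ lies outside $H$ and, by the previous step, outside $S$, and distinct cycles are vertex-disjoint. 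Hence for $x\neq y$ in $Y$ we obtain $xy=w(x)x\,w(x)w(y)\,y\,w(y)=0$ and symmetrically $yx=0$, using $w(x)w(y)=0$. This shows that $Y$ is an orthogonal generating set for $I$.
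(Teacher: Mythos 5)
The survey does not reproduce a proof of this theorem; it is quoted from \cite{R2}, and your outline does follow the architecture of the proof given there (split off $gr(I)=I(H,S)$, pass to $L/I(H,S)\cong L_{K}(E\backslash(H,S))$, locate the residual ideal on cycles without exits, and use that the corner at such a cycle is $K[x,x^{-1}]$). However, there is a genuine gap at the central step. You assert that, because every nonzero element of $\bar{I}$ can be compressed by the Reduction Theorem to a nonzero polynomial $p(\bar{c})$ in some corner $w\bar{L}w$, the elements $f_{t}(c_{t})$ obtained from the traces $\bar{I}\cap w\bar{L}w$ generate $\bar{I}$. The Reduction Theorem does not deliver this: knowing that $\mu^{\ast}a\nu$ lands in such a corner for suitable paths $\mu,\nu$ says nothing about whether $a$ itself lies in the ideal generated by those traces, and you cannot repair this by passing to $\bar{L}/\langle f_{t}(c_{t}):t\in T\rangle$ and reducing again, because that quotient is by a non-graded ideal and is no longer a Leavitt path algebra. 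The missing ingredient is structural: one must first show that $\bar{I}$ is contained in the graded ideal $M$ generated by the vertices of all cycles without exits in $E\backslash(H,S)$, and then invoke the decomposition $M\cong\bigoplus_{i}M_{\Lambda_{i}}(K[x,x^{-1}])$ (the result quoted in the paper as [\cite{AAS}, Lemma 2.7.1]). Once $\bar{I}$ sits inside this direct sum of matrix rings over a PID, its description as a direct sum of ideals $\langle f_{t}(c_{t})\rangle$, the minimality of $\deg f_{t}$, and the orthogonality of the generators all fall out of the componentwise ideal structure. Your outline never establishes the containment $\bar{I}\subseteq M$, which is where the actual work is.

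A second, smaller gap: you correctly observe that the clause ``no vertex $v\in S$ lies on any $c_{t}$'' is needed for your orthogonality argument (it is exactly what guarantees that the localizing vertex of $v^{H}$ is orthogonal to the base vertex of each $c_{t}$), but you only remark that you ``expect the real work to lie'' there without supplying the argument. Since a breaking vertex $v\in S$ survives as a vertex of $E\backslash(H,S)$ emitting finitely many edges, it is not automatic that it cannot sit on a cycle without exits in the quotient graph; this must be ruled out (or the cycle data reassigned) before the set $Y$ as stated is even well defined. As written, then, the proposal is a correct high-level plan that coincides with the published proof's strategy, but both the generation claim and the $S$-disjointness claim are asserted rather than proved.
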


If $I$ is a finitely generated ideal, then the orthogonal set $Y$ of
generators mentioned in the above theorem can be shown to be finite and, in
that case, the single element $a=%
{\displaystyle\sum\limits_{y\in Y}}
y$ will be a generator for the ideal $I$. Consequently, \ we obtain the
following interesting result.

\begin{theorem}
\label{Fin. gen. principal}(\cite{R2}) Every finitely generated ideal in a
Leavitt path algebra is a principal ideal.
\end{theorem}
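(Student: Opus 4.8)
The plan is to combine the two preceding theorems: take the orthogonal generating set $Y$ of $I$ supplied by the previous theorem, cut it down to a finite set using finite generation, and then collapse that finite orthogonal set to a single element using the pairwise orthogonality. Throughout I write $\langle X\rangle$ for the two-sided ideal generated by a set $X$; since $L$ has local units, $\langle X\rangle=LXL$, so every element of $\langle X\rangle$ is a finite sum $\sum_{k}p_{k}x_{k}q_{k}$ with $p_{k},q_{k}\in L$ and $x_{k}\in X$, and hence involves only finitely many members of $X$. First I would dispose of the trivial case $I=0=\langle 0\rangle$, so assume $I\neq 0$ and apply the previous theorem to obtain the orthogonal generating set
\[
Y=H\cup\{v^{H}:v\in S\}\cup\{f_{t}(c_{t}):t\in T\},
\]
whose distinct members annihilate each other on both sides.

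Next comes the finiteness reduction (the step flagged as ``can be shown'' in the remark before the statement). Suppose $I=\langle a_{1},\dots,a_{n}\rangle$. Each $a_{i}$ lies in $I=\langle Y\rangle=LYL$, so expressing the finitely many $a_{i}$ as finite sums uses only a finite subset $Y'\subseteq Y$. Then $I=\langle a_{1},\dots,a_{n}\rangle\subseteq\langle Y'\rangle\subseteq\langle Y\rangle=I$, so $I=\langle Y'\rangle$, where $Y'$ is a finite set that is still pairwise orthogonal (orthogonality being inherited from $Y$).

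Then I would set $a=\sum_{y\in Y'}y$ and prove $I=\langle a\rangle$. Since each $y\in I$ we get $a\in I$, hence $\langle a\rangle\subseteq I$; the work is the reverse inclusion $Y'\subseteq\langle a\rangle$. For an idempotent generator $y_{0}$ (a vertex of $H$ or some $v^{H}$), orthogonality gives $y_{0}a=\sum_{y\in Y'}y_{0}y=y_{0}^{2}=y_{0}$, so $y_{0}\in\langle a\rangle$; summing, the idempotent $e:=\sum(\text{idempotent members of }Y')$ lies in $\langle a\rangle$, whence $a-e=\sum_{t}f_{t}(c_{t})\in\langle a\rangle$. For a cycle generator $f_{t_{0}}(c_{t_{0}})$ with base vertex $v_{t_{0}}=s(c_{t_{0}})$, I would left-multiply: $v_{t_{0}}(a-e)=\sum_{t}v_{t_{0}}f_{t}(c_{t})$. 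Here $v_{t_{0}}f_{t_{0}}(c_{t_{0}})=f_{t_{0}}(c_{t_{0}})$ because every power $c_{t_{0}}^{i}$ is based at $v_{t_{0}}$, while $v_{t_{0}}f_{t}(c_{t})=0$ for $t\neq t_{0}$ since distinct cycles without exits are vertex-disjoint, so their base vertices are distinct orthogonal idempotents and $v_{t_{0}}v_{t}=0$. Thus $f_{t_{0}}(c_{t_{0}})=v_{t_{0}}(a-e)\in\langle a\rangle$, giving $Y'\subseteq\langle a\rangle$ and therefore $I=\langle Y'\rangle\subseteq\langle a\rangle\subseteq I$, i.e. $I=\langle a\rangle$.

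The substantive ingredient here is really the orthogonal generating theorem being invoked, together with the geometric fact that the base vertices of distinct exit-free cycles are distinct; granting these, the orthogonality computation is routine. I expect the only place needing care is the finiteness reduction, but as shown it is immediate once one uses that $L$ has local units, so that membership in $\langle Y\rangle$ forces expressibility through finitely many generators.
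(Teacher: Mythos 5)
Your proof is correct and follows essentially the route the paper sketches in the remark preceding the theorem: obtain the orthogonal generating set $Y$ from the preceding theorem, reduce to a finite orthogonal generating set, and verify that $a=\sum_{y\in Y'}y$ generates $I$ by multiplying on the left by the idempotent generators and by the base vertices of the exit-free cycles. The only (harmless) deviation is that you extract a finite generating subset $Y'\subseteq Y$ via the local-units argument rather than showing, as the paper asserts, that $Y$ itself must be finite; this suffices for the statement and is slightly more economical.
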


\begin{remark}
In \cite{AMT}, the above theorem has been extended by showing that every
finitely generated one-sided ideal of $L$ is a principal ideal, that is, $L$
is a B\^{e}zout ring.
\end{remark}

An important property of graded ideals is the following.

\begin{theorem}
(\cite{RT}) \ Every graded ideal $I(H,S)$ of $L$ can be realized as a Leavitt
path algebra $L_{K}(F)$ of some graph $F$ and further\ the corresponding
quotient ring $L/I(H,S)$ is also a Leavitt path algebra, being isomorphic to
the Leavitt path algebra $L_{K}(E\backslash(H,S))$ of the quotient graph
$E\backslash(H,S)$.
\end{theorem}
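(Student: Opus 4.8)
The plan is to prove the two assertions separately, in both cases using the universal property of Leavitt path algebras together with the Graded Uniqueness Theorem (\cite{AAS}), which guarantees that a graded $K$-algebra homomorphism out of a Leavitt path algebra is injective as soon as it is nonzero on every vertex idempotent. Both $I(H,S)$ and $L/I(H,S)$ inherit the $\mathbb{Z}$-grading of $L$, so every map I construct can be arranged to be graded, which is exactly what makes the Graded Uniqueness Theorem available.

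For the quotient assertion I would produce an explicit isomorphism $\phi \colon L_K(E\backslash(H,S)) \to L/I(H,S)$. Write $\pi$ for the quotient map $L \to L/I(H,S)$. On the vertices of the quotient graph set $\phi(v)=\pi(v)$ for $v\in E^{0}\setminus H$ and $\phi(u')=\pi(u^{H})$ for the auxiliary vertices $u'$ coming from $u\in B_{H}\setminus S$; on edges set $\phi(e)=\pi(e)$, $\phi(e^{\ast})=\pi(e^{\ast})$ for $e$ with $r(e)\notin H$, and $\phi(e')=\pi(e\,u^{H})$, $\phi((e')^{\ast})=\pi(u^{H}e^{\ast})$ for the auxiliary edges $e'$ with $r(e)=u\in B_{H}\setminus S$. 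The first step is to check that this assignment is a Leavitt $(E\backslash(H,S))$-family in $L/I(H,S)$: relations (1)--(3) are direct computations, while the CK-2 relation (4) at an ordinary regular vertex and at each $u'$ must be verified using $u^{H}=u-\sum_{s(e)=u,\,r(e)\notin H}ee^{\ast}$ modulo $I(H,S)$. The universal property then yields $\phi$, which is graded since each generator maps to a homogeneous element of the matching degree. It is surjective because its image contains $\pi(v)$ for $v\notin H$ (while $\pi(v)=0$ for $v\in H$ as $H\subseteq I(H,S)$) together with the images of all edges and ghost edges, hence all of $\pi(L)=L/I(H,S)$. Finally, $\phi$ sends every vertex of the quotient graph to a nonzero element of $L/I(H,S)$: this is precisely the statement that no $v\notin H$ and no $u^{H}$ with $u\in B_{H}\setminus S$ lies in $I(H,S)$, which follows from Theorem~\ref{graded Ideal} via $H=I\cap E^{0}$ and $S=\{u\in B_{H}:u^{H}\in I\}$. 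The Graded Uniqueness Theorem then forces $\phi$ to be injective.

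For the realization of $I(H,S)$ itself I would first build the appropriate graph $F$. Its vertex set consists of $H$ together with one vertex for each finite path $\alpha=e_{1}\cdots e_{n}$ in $E$ that first enters $H$ only at its terminal vertex (that is, $r(\alpha)\in H$ with $s(e_{i})\notin H$ for all $i$), plus one additional vertex for each $u\in S$; the edges of $F$ are arranged so that the CK relations reproduce the multiplication taking place inside the ideal. I would then exhibit an explicit Leavitt $F$-family inside $I(H,S)$ --- each $v\in H$ maps to $v$, each path-vertex $\alpha$ maps to the idempotent $\alpha\alpha^{\ast}$, and each $S$-vertex maps to the corresponding $u^{H}$ --- and invoke the universal property to obtain a graded homomorphism $L_K(F)\to I(H,S)$. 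Surjectivity onto $I(H,S)$ is checked by expressing an arbitrary element of the ideal, via the generating set $H\cup\{v^{H}:v\in S\}$ and the orthogonal-generator description of Theorem~(\cite{R2}), in terms of these images; injectivity again follows from the Graded Uniqueness Theorem, since each chosen generator is a nonzero idempotent.

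The main obstacle is the realization of $I(H,S)$ as $L_K(F)$, specifically the correct definition of the graph $F$ and the verification that the proposed family satisfies CK-2. Unlike the quotient, where the vertices are already present in $E$, the ideal has no a priori vertex set, so the path-idempotents $\alpha\alpha^{\ast}$ must be organized into a graph whose range, source and regularity data exactly match the relations these idempotents satisfy in $L$. The delicate points are handling infinite emitters and the breaking vertices of $H$ correctly --- which is where the set $S$ and the idempotents $u^{H}$ enter --- and ensuring that the grading on $F$ is compatible with the grading $I(H,S)$ inherits from $L$. Once $F$ is correctly specified, the remaining steps are the routine family-checking and uniqueness arguments described above.
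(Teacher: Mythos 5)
The paper states this result without proof (it is quoted from \cite{RT}), so I can only judge your proposal on its own terms, and it has two genuine gaps. First, in the half you do write out explicitly --- the quotient isomorphism --- your Leavitt $E\backslash(H,S)$-family is wrong at the breaking vertices. For $u\in B_{H}\setminus S$ both $u$ and $u'$ are vertices of the quotient graph, so their images must be orthogonal idempotents; but $\pi(u)\pi(u^{H})=\pi(u^{H})\neq 0$ (note $u^{H}\notin I(H,S)$ precisely because $u\notin S$). Likewise, for an edge $e$ with $r(e)=u\in B_{H}\setminus S$, the quotient graph contains the two distinct edges $e$ and $e'$, and CK-1 demands $(e')^{\ast}e=0$; under your assignment $(e')^{\ast}e\mapsto\pi(u^{H}e^{\ast}e)=\pi(u^{H})\neq 0$. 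CK-2 at $u$ fails for the same reason. The correct assignment is $\phi(u)=\pi(u-u^{H})$ and $\phi(e)=\pi(e(u-u^{H}))$ for such $u$ and $e$; with that change the argument (universal property, surjectivity, Graded Uniqueness Theorem) goes through. This is exactly the breaking-vertex subtlety the construction of $E\backslash(H,S)$ exists to handle, and it cannot be waved away as a ``direct computation.''

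Second, for the realization $I(H,S)\cong L_{K}(F)$ you never actually produce $F$: you describe a plausible vertex set (essentially the generalized hedgehog graph of Ruiz--Tomforde), say the edges are ``arranged so that the CK relations reproduce the multiplication,'' and then name the definition of $F$ as the unresolved ``main obstacle.'' That obstacle \emph{is} the content of this half of the theorem, so the proof is incomplete rather than merely sketchy; in particular the paths that reach the ideal only through an idempotent $u^{H}$ with $u\in S$ need their own vertices and edges, which your description omits. There is also a real problem with your injectivity step there: the natural map sends the new edge attached to a path $\alpha$ to (essentially) $\alpha$ itself, an element of degree $|\alpha|$ in $L$, so the homomorphism $L_{K}(F)\rightarrow I(H,S)$ is \emph{not} graded for the standard grading of $L_{K}(F)$ once $|\alpha|>1$. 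One must either invoke the version of the Graded Uniqueness Theorem for gradings in which edges are assigned arbitrary positive degrees, or argue injectivity by other means; as stated, the hypothesis of the theorem you are quoting is not met.
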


Since Leavitt path algebras possess local units, we conclude that the graded
ideals $I$ of $L$ possess local units. Using this, we obtain some interesting
properties of graded ideals.

\begin{proposition}
\label{graded Ideal Property} (\cite{R3}) (i) Let $A$ be a graded ideal of
$L$. Then

(a) for any ideal $B$ of $L$, $AB=A\cap B$, $BA=B\cap A$ and, in particular,
$A^{2}=A$;

(b) $AB=BA$ for all ideals $B$;

(c) If $A=A_{1}\cdot\cdot\cdot A_{m}$ is a product of ideals, then $A=%
{\displaystyle\bigcap\limits_{i=1}^{m}}
gr(A_{i})=$ $%
{\displaystyle\prod\limits_{i=1}^{m}}
gr(A_{i})$. Similarly, if $A=A_{1}\cap\cdot\cdot\cdot\cap A_{m}$ is an
intersection of ideals $A_{i}$, then $A=%
{\displaystyle\bigcap\limits_{i=1}^{m}}
gr(A_{i})=$ $%
{\displaystyle\prod\limits_{i=1}^{m}}
gr(A_{i})$.

(ii) If $A_{1},\cdot\cdot\cdot,A_{m}$ are graded ideals of $L$, then $%
{\displaystyle\prod\limits_{i=1}^{m}}
A_{i}=%
{\displaystyle\bigcap\limits_{i=1}^{m}}
A_{i}$.
\end{proposition}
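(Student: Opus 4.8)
The plan is to establish the two main claims in Proposition~\ref{graded Ideal Property} by leveraging the fact that graded ideals possess local units, together with Tomforde's structural description of graded ideals. The key observation underlying everything is that if $A$ is a graded ideal with local units, then for any $a \in A$ there is an idempotent $u \in A$ with $ua = a = au$, which gives $a \in AA$ and hence the idempotence $A^2 = A$ in part (i)(a).

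For part (i)(a), I would first prove $A^2 = A$ as just indicated. Then, to show $AB = A \cap B$, the inclusion $AB \subseteq A \cap B$ is immediate since $A, B$ are two-sided ideals. For the reverse inclusion, I would take $x \in A \cap B$ and use the local units of the \emph{graded} ideal $A$: there is an idempotent $u \in A$ with $x = ux$. Since $u \in A$ and $x \in B$, the product $ux \in AB$, giving $x \in AB$. The identity $BA = B \cap A$ follows symmetrically (using local units and noting $A \cap B = B \cap A$), and $A^2 = A$ is then the special case $B = A$. Part (i)(b) is then immediate: $AB = A \cap B = B \cap A = BA$. I expect the main subtlety here to be the careful bookkeeping of left versus right local units, but since $u$ can be chosen to act as a two-sided identity on any given element, both sides go through cleanly.

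For part (i)(c), I would exploit that $gr(A_i)$ is the largest graded ideal contained in $A_i$. Given $A = A_1 \cdots A_m$, observe that $A \subseteq A_i$ for each $i$ (as $A$ is an ideal product landing inside each factor, using that ideals absorb on both sides), so $A \subseteq \bigcap_i A_i$; since $A$ is graded, in fact $A \subseteq gr(A_i)$, whence $A \subseteq \bigcap_i gr(A_i)$. For the reverse direction I would apply part (i)(a) repeatedly: each $gr(A_i)$ is graded, so $\prod_i gr(A_i) = \bigcap_i gr(A_i)$ by iterating $CD = C \cap D$ for graded $C$. The remaining task is to show $A$ equals this common value, which amounts to checking $\prod_i gr(A_i) \subseteq A = \prod_i A_i$ and the reverse; the containment $\prod gr(A_i) \subseteq \prod A_i$ is clear, and the opposite uses that $A$ is graded and contained in each $A_i$, forcing $A \subseteq gr(A_i)$ and hence $A \subseteq \prod gr(A_i)$ via idempotence. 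The intersection version is handled the same way. Part (ii) is precisely the iterated application of the graded identity $CD = C \cap D$ from part (i)(a) to finitely many graded ideals $A_1, \dots, A_m$, proved by induction on $m$.

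The main obstacle I anticipate is justifying the reverse inclusion $A \cap B \subseteq AB$ rigorously when $L$ lacks a global identity: the entire argument hinges on the local units living \emph{inside the graded ideal} $A$ rather than merely in $L$, so I would want to state explicitly the lemma (flagged in the paragraph preceding the proposition) that graded ideals inherit local units from $L$. Once that is in hand, the idempotence $A^2 = A$ and the absorption identity $AB = A \cap B$ are short, and parts (i)(b), (i)(c), and (ii) reduce to formal consequences via induction and the maximality property of $gr(-)$.
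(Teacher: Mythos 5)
Your proof is correct and uses exactly the paper's key idea: since the graded ideal $A$ has local units, any $x\in A\cap B$ satisfies $x=ux\in AB$ for an idempotent $u\in A$, giving $A\cap B=AB$ and hence (b) and $A^2=A$. The paper only writes out this argument for (i)(a) and cites \cite{R3} for the rest; your derivations of (i)(c) and (ii) from (i)(a) — via $\prod_i gr(A_i)\subseteq\prod_i A_i=A\subseteq\bigcap_i gr(A_i)=\prod_i gr(A_i)$ and induction — are sound and fill in those citations in the natural way.
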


\begin{proof}
We shall point out the easy proof of (i)(a). We need only to prove $A\cap
B\subseteqq AB$. Let $x\in A\cap B$. Since the graded ideal $A$ has local
units, there is an idempotent $u\in A$ such that $ua=a=au$. Clearly then
$a=ua\in AB$. So $A\cap B=AB$. Similarly, $B\cap A=BA$. Hence $AB=BA$. In
particular, $A^{2}=A\cap A=A$.
\end{proof}

A natural question is when every ideal of $L$ will be a graded ideal. This can
happen when $E$ satisfies the following graph property.

\begin{definition}
A graph $E$ satisfies \textbf{Condition (K)} if whenever a vertex $v$ lies on
a simple closed path $\alpha$, $v$ also lies on another simple closed path
$\beta$ distinct from $\alpha$.
\end{definition}

Here is a simple graph satisfying Condition (K), where every vertex satisfies
the required property.%

\[%
\begin{array}
[c]{ccccc}%
\bullet & \longleftarrow & \bullet & \longleftarrow & \bullet\\
& \searrow &  & \nearrow & \\
&  & \bullet &  & \\
& \nearrow &  & \searrow & \\
\bullet_{v} & \longleftarrow & \bullet & \longleftarrow & \bullet
\end{array}
\]

\begin{theorem}
(\cite{R1}, \cite{T}) The following conditions are equivalent for
$L=:L_{K}(E)$:

(a) Every ideal of $L$ is graded;

(b) Every prime ideal of $L$ is graded;

(c) The graph $E$ satisfies Condition (K).
\end{theorem}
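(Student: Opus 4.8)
The plan is to prove the theorem by establishing the cycle of implications $(c) \Rightarrow (a) \Rightarrow (b) \Rightarrow (c)$, since $(a) \Rightarrow (b)$ is immediate (every prime ideal is in particular an ideal). The two substantive directions are $(c) \Rightarrow (a)$ and $(b) \Rightarrow (c)$, and I would structure the argument around the generator description of ideals already available from Theorem 3.5 (the orthogonal generating set $Y$).

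For $(c) \Rightarrow (a)$, I would appeal directly to the structure of a general ideal $I$ given in Theorem 3.5. That theorem writes $I$ as generated by $H \cup \{v^{H} : v \in S\} \cup \{f_{t}(c_{t}) : t \in T\}$, where each $c_{t}$ is a cycle \emph{without exits} in the quotient graph $E\backslash(H,S)$. The graded part of $I$ is exactly the ideal $I(H,S)$ generated by the first two sets, so the obstruction to $I$ being graded is precisely the presence of the non-graded generators $f_{t}(c_{t})$ arising from cycles without exits. The key point is that Condition (K) forbids cycles without exits: if a vertex $v$ lay on a cycle $c$ with no exit, then $v$ would lie on the simple closed path $c$ but on no second simple closed path distinct from $c$, contradicting Condition (K). Hence under Condition (K) the index set $T$ is forced to be empty (a statement I would want to verify passes correctly to the quotient graph $E\backslash(H,S)$, using that Condition (K) is inherited by quotient graphs), so $I = I(H,S)$ is graded. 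This is the cleanest route and avoids reproving the graded-ideal classification.

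For $(b) \Rightarrow (c)$, I would argue contrapositively: assuming $E$ fails Condition (K), I would produce a non-graded prime ideal. Failure of Condition (K) yields a vertex $v$ on a cycle $c$ that has no exit (the standard equivalence is that Condition (K) holds iff no cycle has an exit is violated in the strong sense—more precisely, failure of (K) produces a cycle without exits in some quotient). I would then pass to the quotient by a suitable graded prime ideal $P_0$ so that the image of $c$ becomes a cycle without exits in the quotient graph, where the corner determined by the cycle is isomorphic to a Laurent polynomial ring $K[x,x^{-1}]$ (as in Example 3.2). Choosing an irreducible polynomial $p(x) \in K[x,x^{-1}]$ and pulling back the ideal it generates produces an ideal $P$ of $L$ with non-graded generator $p(c)$, hence $P$ is not graded; the primeness of $P$ follows because $K[x,x^{-1}]/(p(x))$ is a field, making the corresponding quotient of $L$ prime. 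This exhibits a non-graded prime ideal, completing the contrapositive.

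The main obstacle I anticipate is the $(b) \Rightarrow (c)$ direction, specifically the bookkeeping needed to pass to the right quotient graph and to verify that the constructed ideal $P$ is genuinely prime rather than merely an ideal. The subtlety is that primeness of an ideal in a non-commutative ring with local units must be checked via the product condition $AB \subseteq P \Rightarrow A \subseteq P$ or $B \subseteq P$ for ideals $A,B$, and one must confirm that the Laurent-polynomial corner faithfully controls this. I would expect to lean on the complete lattice-preserving classification of ideals in terms of $(H, S, \text{cycles without exits}, \text{irreducible polynomials})$ cited in the introduction (from \cite{AAS}, Theorem 2.8.10, and \cite{R2}), which packages exactly the correspondence between non-graded primes and irreducible polynomials evaluated at exit-free cycles, thereby making both the non-gradedness and the primeness of $P$ transparent.
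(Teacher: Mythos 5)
The survey does not prove this theorem; it is quoted from \cite{R1} and \cite{T}, so there is no in-paper argument to compare against. Your outline follows the standard proof from those references: (a)$\Rightarrow$(b) is trivial; (c)$\Rightarrow$(a) comes from the orthogonal-generator description of an arbitrary ideal (the theorem from \cite{R2} in Section 3), using that Condition (K) is inherited by every quotient graph $E\backslash(H,S)$ and forces each such quotient to have no cycles without exits, so the index set $T$ of non-graded generators $f_t(c_t)$ is empty; and (b)$\Rightarrow$(c) is the contrapositive construction of a non-graded prime ideal from a failure of Condition (K). The structure and the choice of tools are right.

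The one step you should not leave at the level of a gesture is the primeness verification in (b)$\Rightarrow$(c). As literally written, ``the primeness of $P$ follows because $K[x,x^{-1}]/(p(x))$ is a field'' is not a proof: $L/P$ is not that field but a much larger ring containing $M_{\Lambda}\bigl(K[x,x^{-1}]/(p(x))\bigr)$ as an ideal, and such a ring is prime only when that ideal meets every non-zero ideal non-trivially. That is exactly where the downward-directedness hypothesis of Theorem \ref{R-1}(iii) ($v\geq u$ for all $v\in E^{0}\backslash H$) enters, and it does not come for free from an arbitrary ``suitable graded prime ideal $P_{0}$.'' The standard repair is explicit: take $v$ a vertex lying on a unique simple closed path $c$ (which one checks is a cycle), set $H=\{w\in E^{0}:w\not\geq v\}$, and verify that $H$ is hereditary and saturated, that $c$ becomes a cycle without exits in $E\backslash(H,B_{H})$ (an exit surviving to the quotient would produce a second simple closed path through $v$), and that every vertex outside $H$ connects to $v$. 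Then $P=I(H,B_{H})+\langle p(c)\rangle$ is a non-graded prime ideal by Theorem \ref{R-1}(iii). Since you explicitly defer to that classification, this is a gap in exposition rather than a wrong turn, but the construction of $H$ and the downward-directedness check are the actual content of this implication and need to be written out.
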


\section{The lattice of ideals of a Leavitt path algebra}

This section describes how the ideals of a Leavitt path algebra $L$ share
lattice-theoretic properties and module-theoretic properties of the ideals of
a Dedekind domain or a Pr\"{u}fer domain. We start with noting that, in this
non-commutative ring $L$, the multiplication of ideals is commutative.
\ Moreover, $L$ is left/right hereditary, that is, every left/right or
two-sided ideal of $L$ is projective as a left or a right ideal. The ideal
lattice of $L$ is distributive and multiplicative. It is also shown how many
of the characterizing properties of a Pr\"{u}fer domain stated in terms of its
ideals hold in $L$.

Using a deep theorem of George Bergman, Ara and Goodearl proved the following
result that every Leavitt path algebra is a left/right hereditary ring, a
property shared by Dedekind domains.

\begin{theorem}
(Theorem 3.7, \cite{AG}) Every ideal (including any one-sided ideal) of a
Leavitt path algebra $L$ is projective as a left/right $L$-module.
\end{theorem}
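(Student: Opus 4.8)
The plan is to prove the stronger statement that $L$ is both left and right \emph{hereditary}, i.e. that its left (equivalently, by the involution $\ast$, its right) global dimension is at most $1$; projectivity of every one-sided and two-sided ideal is then immediate, since each such ideal is a submodule of the regular module ${}_{L}L$ (or $L_{L}$), and over a hereditary ring submodules of projectives are projective. Because $L$ carries the anti-automorphism $\ast$ of the Preliminaries, left and right module theory are interchanged, so it suffices to control one side; and because $L$ is non-unital but has local units, one works throughout with the standard module theory for rings with local units, where ``hereditary'' means precisely that every one-sided ideal is projective. Thus the whole assertion reduces to a single global-dimension bound.

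First I would realize $L$ through a chain of universal ring constructions starting from a semisimple base, so that Bergman's machinery applies. The coefficient algebra on the vertices, a direct limit of finite products of copies of $K$ indexed by $E^{0}$, is von Neumann regular and has global dimension $0$. Adjoining to it the edges $\{e\}$ and ghost edges $\{e^{\ast}\}$ subject only to relations (1), (2) and the CK-1 relations (3) of the Definition produces the \emph{Cohn path algebra} $C_{K}(E)$; this is exactly a Bergman coproduct / universal-generator construction over the semisimple base, and Bergman's coproduct theorem shows that such a construction does not raise the global dimension above $1$, so $C_{K}(E)$ is hereditary. The Leavitt path algebra is then obtained from $C_{K}(E)$ by imposing the CK-2 relations (4) at each regular vertex, and the key observation is that imposing (4) is a \emph{universal localization}: at a regular vertex $v$ the relation $v=\sum_{s(e)=v}ee^{\ast}$ forces a specific homomorphism between finitely generated projective right $C_{K}(E)$-modules to become an isomorphism, and $L=L_{K}(E)$ is precisely the universal localization $\Sigma^{-1}C_{K}(E)$ at the set $\Sigma$ of all these morphisms.

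With this realization in hand, the conclusion follows from Bergman's theorem that the universal localization of a hereditary ring at a set of maps between finitely generated projectives is again hereditary (the global dimension does not increase). Hence $L$ is left hereditary, and by the $\ast$-involution right hereditary, so every left, right and two-sided ideal of $L$ is projective, which is the assertion of Theorem 3.7 of \cite{AG}. The main obstacle I anticipate is not the final invocation but the two structural identifications underneath it: verifying that the CK-2 relations genuinely present $L$ as the universal localization $\Sigma^{-1}C_{K}(E)$ (that is, pinning down the projective modules and the maps in $\Sigma$ at each regular vertex), and checking that Bergman's coproduct and localization theorems---stated for unital algebras over a field or semisimple base---transfer to the present non-unital, arbitrary-graph setting, which for an infinite graph typically requires writing $L$ as a directed union of the Leavitt path algebras of its finite complete subgraphs and confirming that heredity passes to this direct limit.
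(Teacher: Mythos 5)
Your outline follows essentially the same route the paper points to: the survey gives no proof of its own but attributes the result to Ara and Goodearl's application of Bergman's universal ring constructions, which is precisely the argument you sketch (build the algebra from the semisimple vertex base by Bergman's universal constructions at the regular vertices, invoke his theorem that these preserve hereditariness, and pass to direct limits for infinite graphs). The two verifications you flag at the end---identifying the maps in $\Sigma$ at each regular vertex and transferring the unital statements to the local-unit, arbitrary-graph setting---are exactly the content of the cited source, so your plan is the standard one rather than a genuinely different proof.
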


In Section 3, we noted that if $A$ is a graded ideal of $L$, then $AB=BA$ for
any ideal $B$ of $L$. What happens if $A$ is not a graded ideal ? With an
analysis of the "non-graded parts" of $A$ and $B$, it was shown in \cite{AAS}
and \cite{R3} that even though $L$ is, in general, non-commutative, the
multiplication of its ideals is commutative as noted next.

\begin{theorem}
(\cite{AAS}, \cite{R3}) For any two arbitrary ideals $A,B$ of a Leavitt path
algebra $L$, $AB=BA$.
\end{theorem}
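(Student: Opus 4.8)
The plan is to split each ideal into a graded part plus its non-graded generators, dispose of the graded contributions using the commutation law already available for graded ideals, and localise the remaining non-graded contributions onto single cycles without exits, where the relevant algebra is commutative. Invoking the orthogonal-generator theorem of \cite{R2}, write $A$ as generated by $H_A\cup\{v^{H_A}:v\in S_A\}\cup\{f_t(c_t):t\in T\}$, so that $A=gr(A)+A'$, where $gr(A)=I(H_A,S_A)$ and $A'=\langle f_t(c_t):t\in T\rangle$ is generated by the non-graded elements attached to cycles $c_t$ without exits in $E\backslash(H_A,S_A)$; likewise $B=gr(B)+B'$ with $B'=\langle g_s(d_s):s\in U\rangle$. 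By distributivity of ideal multiplication, $AB=gr(A)gr(B)+gr(A)B'+A'gr(B)+A'B'$, and $BA$ expands symmetrically.

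Since $gr(A)$ and $gr(B)$ are graded, Proposition \ref{graded Ideal Property} gives $gr(A)X=gr(A)\cap X=X\cap gr(A)=Xgr(A)$ for every ideal $X$. Hence $gr(A)gr(B)$ is symmetric, and the cross terms satisfy $gr(A)B'=B'gr(A)$ and $A'gr(B)=gr(B)A'$, so each such term occurring in $AB$ also occurs in $BA$. This reduces the whole problem to comparing the single non-graded product $A'B'$ with $B'A'$.

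For the non-graded product I would argue cycle by cycle, using that two exit-free cycles through a common vertex must coincide, so distinct cycles without exits are vertex-disjoint. For a fixed cycle $c=c_t=d_s$, the elements $f(c)$ and $g(c)$ are polynomials in the commuting pair $c,c^{\ast}$, and the two-sided ideal generated by an exit-free cycle is isomorphic to a matrix ring $M_\Lambda(K[x,x^{-1}])$ over the commutative Laurent polynomial ring; by the Morita equivalence $M_\Lambda(R)\sim R$ recorded in the Preliminaries, its two-sided ideal lattice is that of $K[x,x^{-1}]$, whose ideals multiply commutatively. Thus the contributions of $\langle f(c)\rangle$ and $\langle g(c)\rangle$ to $A'B'$ and $B'A'$ coincide. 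When $c_t\neq d_s$, I would show that any product threading through two different exit-free cycles collapses into the graded parts already accounted for, so that $A'B'$ and $B'A'$ differ only by graded ideals and hence agree by the second paragraph.

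The crux, and the step I expect to be hardest, is this last claim about distinct cycles: one must verify that products of non-graded generators attached to different exit-free cycles yield only graded ideals, which requires tracking how the nonzero constant terms of $f_t$ and $g_s$ interact with paths running between the cycles and confirming that no genuinely non-graded element survives. Pinning down the model $M_\Lambda(K[x,x^{-1}])$ for the ideal generated by a single exit-free cycle, with the correct index set $\Lambda$ and the contribution of any breaking vertices, is the other technical ingredient. Once the non-graded behaviour is confined to one commutative cycle algebra at a time, the commutativity $AB=BA$ follows by reassembling the symmetric pieces.
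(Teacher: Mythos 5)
First, a point of reference: the survey itself does not prove this theorem; it cites \cite{AAS} and \cite{R3} and describes the method only as ``an analysis of the non-graded parts of $A$ and $B$.'' Your architecture --- write $A=gr(A)+A'$ and $B=gr(B)+B'$ via the orthogonal-generator theorem of \cite{R2}, dispose of every term containing a graded factor by Proposition \ref{graded Ideal Property}, and reduce to products $\langle f_t(c_t)\rangle\langle g_s(d_s)\rangle$ attached to exit-free cycles, treating a common cycle through the model $M_{\Lambda}(K[x,x^{-1}])$ --- is exactly that method, so the approach is the right one.

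However, the step you yourself flag as hardest is a genuine gap, and your fallback inference is not valid as stated: if $A'B'$ and $B'A'$ ``differ only by graded ideals,'' that does not yet make them equal, since you would still have to match the two graded discrepancies. The correct claim, and the missing idea, is that for $c_t\neq d_s$ the product $\langle f_t(c_t)\rangle\langle g_s(d_s)\rangle$ is contained in $gr(A)B+A\,gr(B)$, a sum that is already symmetric in $A$ and $B$, so these cross-cycle terms are simply absorbed. The mechanism is graph-theoretic: a generator of the product has the form $x\,f_t(c_t)\,\alpha\beta^{\ast}\,g_s(d_s)\,z$ with $s(\alpha)$ on $c_t$, $s(\beta)$ on $d_s$ and $r(\alpha)=r(\beta)$; because $c_t$ has no exits in $E\backslash(H_A,S_A)$, every exit of $c_t$ in $E$ ranges into $H_A$, so either $\alpha$ stays on $c_t$ or $\alpha$ passes through $H_A$ and the whole product lands in $gr(A)B$; in the former case $\beta$ must travel from $d_s$ back to a vertex of $c_t$, and the same dichotomy applied to $d_s$ and $H_B$ puts the product into $A\,gr(B)$. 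Relatedly, your claim that distinct exit-free cycles are vertex-disjoint is not immediate here, because $c_t$ and $d_s$ are exit-free in two \emph{different} quotient graphs $E\backslash(H_A,S_A)$ and $E\backslash(H_B,S_B)$; if they do share a vertex, one finds an edge of $d_s$ that is an exit of $c_t$, forcing $d_s^{0}\subseteq H_A$ and hence $\langle g_s(d_s)\rangle\subseteq gr(A)\cap B=gr(A)B$, which again absorbs the term. With these containments supplied, $AB=gr(A)B+A\,gr(B)+\sum_{c_t=d_s}\langle f_t(c_t)\rangle\langle g_s(c_t)\rangle$ and the right-hand side is visibly symmetric, closing the proof.
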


The next result shows that every Leavitt path algebra $L$ is an arithmetical
ring, that is the ideal lattice of $L$ is distributive, a property that
characterizes Pr\"{u}fer domains.

\begin{theorem}
\label{LPA is Arithmetic} (\cite{R3}) For any three ideals $A,B,C$ of the
Leavitt path algebra $L$, we have
\[
A\cap(B+C)=(A\cap B)+(A\cap C).
\]

\end{theorem}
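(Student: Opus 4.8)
The containment $(A\cap B)+(A\cap C)\subseteq A\cap(B+C)$ is automatic in any ring, since $A\cap B$ and $A\cap C$ both lie in $A$ (an ideal, hence closed under addition) and in $B+C$. Thus the entire content is the reverse inclusion $A\cap(B+C)\subseteq (A\cap B)+(A\cap C)$. Modularity of the ideal lattice is free and already yields this whenever $C\subseteq A$, but distributivity genuinely requires the structure of $L$; the plan is to isolate where the ``graded'' and the ``non-graded'' data of the three ideals interact.

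First I would dispose of the case in which the meet-factor $A$ is \emph{graded}. Here the argument is purely formal: multiplication of ideals always distributes over sums, so $A(B+C)=AB+AC$, while Proposition~\ref{graded Ideal Property}(i)(a) gives $A\cap X=AX$ for the graded ideal $A$ and \emph{every} ideal $X$. Applying this to $X=B+C$, to $X=B$, and to $X=C$ yields
\[
A\cap(B+C)=A(B+C)=AB+AC=(A\cap B)+(A\cap C),
\]
so the identity holds whenever $A$ is graded (with $B,C$ arbitrary).

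The remaining, essential case is a non-graded $A$, where $A\cap X=AX$ fails and a structural reduction is needed. By the orthogonal-generator theorem of \cite{R2}, the non-graded behaviour of any ideal is carried entirely by generators $f_t(c_t)$ attached to cycles without exits in the quotient graph $E\backslash(H,S)$, and these generators are mutually orthogonal and orthogonal to the graded generators $H\cup\{v^H\}$. I would use this orthogonality to localise the verification cycle-by-cycle: modulo an appropriate graded ideal, the subalgebra governed by a cycle without exits is a matrix ring over the Laurent polynomial ring $K[x,x^{-1}]$ (Example~\ref{Laurent}), which, being Morita equivalent to a PID, has a distributive two-sided ideal lattice. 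Combining the distributivity of the lattice of graded ideals --- which follows from the graded case above together with the description of graded ideals by pairs $(H,S)$ in Theorem~\ref{graded Ideal} --- with the distributivity of each such commutative piece should give the general identity.

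The main obstacle is precisely this gluing step. A single element $x=b+c\in A\cap(B+C)$ may simultaneously feed the graded part and several cycles' non-graded parts, and the three ideals $A$, $B$, $C$ need not activate the same cycles nor carry compatible polynomial data (their $f_t$ have, a priori, unrelated degrees and constant terms). The crux is to show that the decomposition $x=b+c$ can be rechosen so that $b\in A\cap B$ and $c\in A\cap C$; I expect this to require writing $x$ in the normal form $\sum k_i\alpha_i\beta_i^{\ast}$, using the CK-relations and the commutativity $AB=BA$ to separate the homogeneous contribution from the contributions supported on each cycle, and then invoking the PID case on each cycle and the graded case on the remainder. Reconciling the polynomial data across $A$, $B$, $C$ on a common cycle is where the genuine computation lies.
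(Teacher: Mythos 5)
The paper itself offers no proof of this theorem; it is quoted from \cite{R3}. So the only question is whether your argument stands on its own, and it does not: it is complete exactly where the theorem is easy and becomes a research plan exactly where the theorem has content. Your first two steps are correct. The inclusion $(A\cap B)+(A\cap C)\subseteq A\cap(B+C)$ is formal, and when $A$ is graded the chain $A\cap(B+C)=A(B+C)=AB+AC=(A\cap B)+(A\cap C)$ is a genuine proof, since $A(B+C)=AB+AC$ holds for two-sided ideals in any ring and Proposition~\ref{graded Ideal Property}(i)(a) converts each product back into an intersection. The ingredients you name for the remaining case are also the right ones: the orthogonal-generator description of ideals from \cite{R2}, and the fact that the ideal generated by an exit-free cycle is isomorphic to $M_{\Lambda}(K[x,x^{-1}])$, whose ideal lattice is that of a PID and hence distributive.

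The gap is that the ``gluing step'' you flag as the main obstacle \emph{is} the theorem, and you do not carry it out. Concretely: suppose $B$ and $C$ are non-graded ideals carrying polynomials $p(x)$ and $q(x)$ on a common exit-free cycle $d$ with $\gcd(p,q)=1$. Then $B+C$ contains the entire graded ideal generated by $d^{0}$, so $gr(B+C)$ strictly exceeds $gr(B)+gr(C)$; the graded and non-graded data of the summands interact, and an element $x\in A\cap(B+C)$ supported on that cycle must be re-split as a sum of an element of $A\cap B$ and an element of $A\cap C$ using a B\'ezout identity for $p$ and $q$ transported through the isomorphism with $M_{\Lambda}(K[x,x^{-1}])$, while simultaneously keeping both pieces inside $A$ (whose own polynomial on $d$ is a third, unrelated datum). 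Nothing in your write-up produces this splitting; you explicitly defer it (``I expect this to require\dots'', ``where the genuine computation lies''). Until that reconciliation is performed --- including the verification that the cycle-by-cycle corrections are orthogonal to each other and to the graded part, so that they can be summed --- the argument establishes only the special case of a graded meet-factor, not the theorem.
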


\begin{remark}
A well-known result in commutative rings (see e.g. Theorem 18, Ch. V,
\cite{ZS}) states that if the ideal lattice of a commutative ring $R$ is
distributive (such as when $R$ is a Dedekind domain), then the Chinese
Remainder Theorem holds in $R$: This means that the simultaneous congruences
$x\equiv x_{i}$ $(\operatorname{mod}$ $A_{i})$ ($i=1,\cdot\cdot\cdot,n$) where
the $A_{i}$ are ideals and the elements $x_{i}\in R$, admits a solution for
$x$ in $R$ provided the compatibility condition $x_{i}+x_{j}\equiv0$
$(\operatorname{mod}A_{i}+A_{j})$ holds for all $i\neq j$. The proof of this
theorem does not require $R$ to be commutative and nor does it require the
existence of a multiplicative identity in $R$. So, as a consequence of Theorem
\ref{LPA is Arithmetic}, one can show that the Chinese Remainder Theorem holds
in Leavitt path algebras. (Thus Leavitt path algebras satisfy another property
of Dedekind domains).
\end{remark}

We next use Theorem \ref{LPA is Arithmetic}, to show that every Leavitt path
algebra is a multiplication ring, a useful property in the multiplicative
ideal theory of Leavitt path algebras.

\begin{theorem}
\label{LPAs are Multiplication rings} (\cite{R3}) The Leavitt path algebra
$L=L_{K}(E)$ of an arbitrary graph $E$ is a multiplication ring, that is, for
any two ideals $A,B$ of $L$ with $A\subseteq B$, there is an ideal $C$ of $L$,
such that $A=BC=CB$. Moreover, if $A$ is a prime ideal, then $AB=A=BA$.
\end{theorem}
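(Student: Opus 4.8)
The plan is to split the argument according to whether the smaller ideal $A$ is graded, since the graded case is immediate and isolates the genuine difficulty in the non-graded part. First I would dispose of the graded case: if $A$ is graded and $A\subseteq B$, then Proposition~\ref{graded Ideal Property}(i)(a) gives $AB=A\cap B=A$, so that $C=A$ already satisfies $A=BC=CB$, the commutativity of ideal multiplication being available. Thus the only real work lies in producing a cofactor $C$ when $A$ is non-graded.

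For the non-graded case I would invoke the orthogonal-generating-set theorem of \cite{R2} to describe $A$ through its graded part $gr(A)=I(H,S)$ together with a (possibly infinite) family of polynomial generators $f_t(c_t)$, where each $c_t$ is a cycle without exits in the quotient graph $E\backslash(H,S)$ and $f_t\in K[x]$ has nonzero constant term. Passing to the quotient $L/gr(A)$, which is again a Leavitt path algebra, the non-graded behaviour of $A$ becomes concentrated on the pairwise-disjoint cycles without exits: the corner determined by such a cycle is isomorphic to a matrix ring $M_{\Lambda}(K[x,x^{-1}])$ over the Laurent polynomial ring of Example~\ref{Laurent}. Because $K[x,x^{-1}]$ is a principal ideal domain, the containment $A\subseteq B$ forces, on each such component, the Laurent polynomial generating $A$ to be divisible by that generating $B$; the resulting quotient polynomials, lifted back through these isomorphisms, furnish the local cofactors. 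I would then assemble these local cofactors together with $gr(A)$ into a single two-sided ideal $C$ and verify $A=BC$ using the distributivity of the ideal lattice (Theorem~\ref{LPA is Arithmetic}) and the commutativity of ideal multiplication to control the cross terms between distinct components.

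The prime case then follows formally from the multiplication property just established. Given a prime ideal $A\subseteq B$, write $A=BC$. Since $BC=A\subseteq A$ and $A$ is prime, either $C\subseteq A$ or $B\subseteq A$. If $C\subseteq A$ then $A=BC\subseteq BA=AB\subseteq A$, forcing $AB=A$, and commutativity yields $BA=A$ as well. The remaining alternative $B\subseteq A$ collapses to $A=B$; here $AB=A$ holds exactly when $A$ is idempotent, in particular for graded primes by Proposition~\ref{graded Ideal Property}, so I would record this degenerate case under the hypothesis $A\subsetneq B$, which is the situation arising in the factorization applications.

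The main obstacle I anticipate is the non-graded gluing step. Although each cycle without exits individually reduces the problem to divisibility in the principal ideal domain $K[x,x^{-1}]$, one must check that the locally constructed cofactors can be combined into one genuine two-sided ideal $C$ of $L$ giving $A=BC$ \emph{on the nose}, with no leftover contribution from $gr(B)$ nor from interactions between different cycles. It is precisely here that arithmeticity (Theorem~\ref{LPA is Arithmetic}) and the identity $AB=A\cap B$ for graded $A$ (Proposition~\ref{graded Ideal Property}) are needed, to guarantee that the graded and non-graded pieces multiply independently and that the product $BC$, upon intersecting with $B$, recovers exactly $A$.
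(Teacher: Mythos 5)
The survey does not reproduce a proof of this theorem; it is quoted from \cite{R3}. Your reconstruction follows the same strategy as the argument given there: dispose of the case where one of the ideals is graded via $AB=A\cap B$ (note that it suffices for \emph{either} $A$ \emph{or} $B$ to be graded --- if $B$ is graded then $BA=B\cap A=A$ already, so the genuinely hard case is when both are non-graded), and then use the orthogonal generating set of \cite{R2} to localize the problem to the cycles without exits, where the ideal lattice of the corner $M_{\Lambda}(K[x,x^{-1}])$ is that of the principal ideal domain $K[x,x^{-1}]$ and the cofactor comes from polynomial division. Your handling of the prime case is also correct, and you rightly flag the degenerate situation $A=B$ with $A$ a non-graded prime: there $A^{2}\neq A$ by Proposition \ref{Intersection of powers}, so the ``moreover'' clause genuinely requires $A\subsetneq B$ (or $A$ graded); the argument $BC\subseteq A$, $B\nsubseteq A$, hence $C\subseteq A$ and $A=BC\subseteq BA\subseteq A$ is exactly right.

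One small case your sketch glosses over in the gluing step: a cycle $c_{t}$ appearing in the generating set of $A$ need not survive as a cycle without exits modulo $gr(B)$ --- its vertices may lie in $B\cap E^{0}$, in which case $B$ contains the entire (graded) component $\langle c_{t}^{0}\rangle$ and there is no ``polynomial generating $B$'' on that component to divide by. There the local cofactor is simply $\langle f_{t}(c_{t})\rangle$ itself, since $\langle c_{t}^{0}\rangle\cdot\langle f_{t}(c_{t})\rangle=\langle c_{t}^{0}\rangle\cap\langle f_{t}(c_{t})\rangle=\langle f_{t}(c_{t})\rangle$ by Proposition \ref{graded Ideal Property}. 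With that dichotomy made explicit, the pairwise orthogonality of the generators ($xy=0=yx$ for $x\neq y$ in $Y$) is precisely what kills the cross terms between distinct components, and your assembled $C$ does satisfy $A=BC=CB$.
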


A well-known property of a Dedekind domain $R$ is that if there are only
finitely many prime ideals in $R$, then $R$ is a principal ideal domain (see
Theorem 16, Ch. V in \cite{ZS}). Interestingly, as the next theorem shows, a
Leavitt path algebra possesses this property.

\begin{theorem}
\label{Finitely many primes} (\cite{EER}) Let $L:=L_{K}(E)$ be the Leavitt
path algebra of an arbitrary graph $E$. If $L$ has only a finite number of
prime ideals, then every ideal of $L$ is a principal ideal.
\end{theorem}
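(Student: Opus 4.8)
The plan is to reduce the problem to the case in which every ideal of $L$ is graded, and then to exploit the finiteness of the prime spectrum to show that $L$ has only finitely many ideals altogether; the conclusion will then follow from the fact that every finitely generated ideal of $L$ is principal. Throughout I would write $P_1,\dots,P_n$ for the (finitely many) prime ideals of $L$.

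First I would show that the hypothesis forces $E$ to satisfy Condition (K). Suppose not. By the theorem equating Condition (K) with the gradedness of all prime ideals, $L$ then admits a non-graded prime ideal, and hence a non-graded ideal; by the orthogonal-generators theorem of \cite{R2}, such an ideal is built using an element $f_t(c_t)$ with $c_t$ a cycle without exits in a quotient graph $E\backslash(H,S)$. Passing to $L/I(H,S)\cong L_K(E\backslash(H,S))$, the cycle $c_t$ without exits produces a copy of the Laurent polynomial ring $K[x,x^{-1}]$ as in Example \ref{Laurent}, and for each irreducible $p(x)\in K[x,x^{-1}]$ the element $p(c_t)$ determines a distinct prime ideal of the quotient, which pulls back to a distinct prime ideal of $L$. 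Since $K[x,x^{-1}]$ has infinitely many irreducible polynomials over any field, this yields infinitely many prime ideals of $L$, contradicting the hypothesis. Hence Condition (K) holds, and by the cited equivalence every ideal of $L$ is graded. I expect this reduction to be the main obstacle, since it rests on the explicit description of the non-graded primes attached to a cycle without exits.

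With every ideal graded, I would invoke two facts recorded earlier: graded ideals coincide with their own radical, and the prime (Baer) radical of $L$ is zero. Consequently, for any ideal $I$ one has $I=\sqrt{I}=\bigcap\{P : P \text{ prime},\ P\supseteq I\}$, where the intersection runs over exactly those $P_i$ that contain $I$. Thus $I$ is completely determined by the subset $\{i : P_i\supseteq I\}\subseteq\{1,\dots,n\}$, so the assignment $I\mapsto\{i : P_i\supseteq I\}$ is injective and $L$ possesses at most $2^{n}$ ideals.

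Finally, having only finitely many ideals, $L$ satisfies the ascending chain condition on two-sided ideals, so every ideal is finitely generated. An appeal to Theorem \ref{Fin. gen. principal}, which asserts that every finitely generated ideal of a Leavitt path algebra is principal, then shows that every ideal of $L$ is principal, completing the argument. The only delicate point is the reduction to Condition (K); once the spectrum is known to consist entirely of graded primes, the radical property of graded ideals converts finiteness of the spectrum directly into finiteness of the whole ideal lattice.
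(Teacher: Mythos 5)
Your argument is correct and ends with exactly the same chain as the paper (finitely many ideals, hence ACC, hence every ideal finitely generated, hence principal by Theorem \ref{Fin. gen. principal}), but it differs in the key middle step: the paper simply cites Proposition 2.10 of \cite{EER} for the implication that finitely many prime ideals forces finitely many ideals, whereas you reprove that implication from results already stated in the survey. Your two ingredients --- (a) failure of Condition (K) yields a non-graded prime ideal and hence, by varying the irreducible polynomial, infinitely many primes; and (b) under Condition (K) every ideal is graded, hence a radical ideal, hence the intersection of the primes containing it, so that $L$ has at most $2^{n}$ ideals --- do give a self-contained proof, which is a genuine gain. One point to tighten in (a): to see that each irreducible $q(x)$ yields a prime ideal, argue directly from Theorem \ref{R-1}(iii): the non-graded prime you start with has the form $I(H,B_{H})+\langle p(c)\rangle$ with $v\geq u$ for all $v\in E^{0}\backslash H$, and since this graph-theoretic condition does not depend on the polynomial, $I(H,B_{H})+\langle q(c)\rangle$ is prime for every irreducible $q$. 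As written, your detour through the quotient algebra and the cycle supplied by the orthogonal-generator theorem leaves the required downward-directedness unverified, and the claim that a prime ideal of the matrix-ring ideal $M_{\Lambda}(K[x,x^{-1}])$ is prime in the ambient algebra needs the local-unit argument the paper only develops later. With that adjustment your proof is complete and somewhat more informative than the paper's citation.
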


This follows from Proposition 2.10 of \cite{EER} that $L$ then has only
finitely many ideals and so they satisfy the ascending chain condition.
Consequently they are all finitely generated. By Theorem
\ref{Fin. gen. principal}, they are principal ideals.

In a recent paper \cite{EEKR}, it has been shown that the ideals of a Leavitt
path algebra satisfy two more characterizing properties of Pr\"{u}fer domains
among integral domains.

\begin{theorem}
(\cite{EEKR}) Let $A,B,C$ be any three ideals of a Leavitt path algebra $L$. Then

(i) $\ A(B\cap C)=AB\cap AC$;

(ii) $(A\cap B)(A+B)=AB$.
\end{theorem}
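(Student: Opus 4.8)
The plan is to prove both identities by exploiting the two pillars already established in the excerpt: the arithmetical (distributive) property of the ideal lattice (Theorem \ref{LPA is Arithmetic}) and the multiplication-ring property (Theorem \ref{LPAs are Multiplication rings}), together with the graded-ideal machinery of Proposition \ref{graded Ideal Property}. For part (i), the inclusion $A(B\cap C)\subseteq AB\cap AC$ is automatic since $B\cap C\subseteq B$ and $B\cap C\subseteq C$. The work is the reverse inclusion $AB\cap AC\subseteq A(B\cap C)$.

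**First I would** try to reduce everything to graded ideals, because on graded ideals products and intersections coincide and all the subtleties disappear. By Proposition \ref{graded Ideal Property}(i)(c), any product of ideals equals the product (and the intersection) of the graded parts of its factors. Thus $AB=\mathrm{gr}(A)\cap\mathrm{gr}(B)=\mathrm{gr}(A)\,\mathrm{gr}(B)$ and likewise $AC=\mathrm{gr}(A)\,\mathrm{gr}(C)$. This is the crucial leverage: the products on both sides of (i) are forced to be graded and are computed purely by intersecting graded parts. So $AB\cap AC=\mathrm{gr}(A)\cap\mathrm{gr}(B)\cap\mathrm{gr}(C)$. On the other side, $A(B\cap C)$ is again a product, hence graded, and equals $\mathrm{gr}(A)\cap\mathrm{gr}(B\cap C)$. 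The remaining point is the identity $\mathrm{gr}(B\cap C)=\mathrm{gr}(B)\cap\mathrm{gr}(C)$, which should follow directly from the definition of the graded part as the sum of the homogeneous slices: $(B\cap C)\cap L_n=(B\cap L_n)\cap(C\cap L_n)$. Assembling these gives $A(B\cap C)=\mathrm{gr}(A)\cap\mathrm{gr}(B)\cap\mathrm{gr}(C)=AB\cap AC$, proving (i).

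**For part (ii)** I would combine (i) with the arithmetical identity. Starting from $(A\cap B)(A+B)$, distribute the product over the sum — which is legitimate for ideal multiplication — to get $(A\cap B)A+(A\cap B)B$. Since $A\cap B\subseteq A$ and $A\cap B\subseteq B$, the multiplication-ring structure lets us analyze each summand; more directly, apply part (i) in the transposed form or use $(A\cap B)A\subseteq AB$ and $(A\cap B)B\subseteq AB$, so the sum lands inside $AB$. For the reverse inclusion $AB\subseteq(A\cap B)(A+B)$, I would again pass to graded parts: both sides, being built from products, collapse to intersections of graded ideals, where $(A\cap B)(A+B)=\mathrm{gr}(A\cap B)\cap\mathrm{gr}(A+B)$ and this must be shown to equal $\mathrm{gr}(A)\cap\mathrm{gr}(B)=AB$. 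Here one uses that $\mathrm{gr}(A+B)$ contains both $\mathrm{gr}(A)$ and $\mathrm{gr}(B)$, so intersecting with $\mathrm{gr}(A\cap B)=\mathrm{gr}(A)\cap\mathrm{gr}(B)$ returns exactly $\mathrm{gr}(A)\cap\mathrm{gr}(B)$.

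**The hard part will be** controlling the graded-part identities, especially $\mathrm{gr}(A\cap B)=\mathrm{gr}(A)\cap\mathrm{gr}(B)$ and the interaction of $\mathrm{gr}$ with sums, since $\mathrm{gr}(A+B)$ need not equal $\mathrm{gr}(A)+\mathrm{gr}(B)$ in general — sums can acquire new homogeneous elements. I expect the clean resolution is that we never need the sum's graded part precisely; we only need the one-sided containment $\mathrm{gr}(A),\mathrm{gr}(B)\subseteq\mathrm{gr}(A+B)$, which is immediate, so that intersecting against the already-determined graded ideal $\mathrm{gr}(A)\cap\mathrm{gr}(B)$ gives the answer without computing $\mathrm{gr}(A+B)$ exactly. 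If instead the graded-part-of-intersection identity proves delicate for non-graded $A,B$ (because the orthogonal generators from the $\mathrm{gr}(A_i)$-structure interact with the polynomial generators $f_t(c_t)$ of Theorem 3.3), the fallback is to invoke Proposition \ref{graded Ideal Property}(i)(c) literally as stated, which already packages products and intersections of graded parts together and thereby bypasses any separate verification.
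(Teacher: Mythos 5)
There is a genuine gap, and it is fatal to both parts: you have misread Proposition \ref{graded Ideal Property}(i)(c). That statement is made under the standing hypothesis of part (i), namely that $A$ \emph{is a graded ideal}; it says that \emph{if} a graded ideal happens to be written as a product $A_{1}\cdots A_{m}$, \emph{then} it equals $\bigcap gr(A_{i})=\prod gr(A_{i})$. It does not say that every product of ideals is graded, and that claim is false. Indeed, the paper's own Proposition \ref{Intersection of powers} and its corollary show that for a non-graded ideal $A$ one has $A\neq A^{2}$ and $\bigcap_{n\geq1}A^{n}=gr(A)$, so the powers of $A$ form a strictly descending chain of (necessarily non-graded) ideals. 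Concretely, in $L_{K}(E)\cong K[x,x^{-1}]$ for the single-loop graph (Example \ref{Laurent}), take $A=B=\langle 1+x\rangle$: then $gr(A)=0$ because a proper ideal of $K[x,x^{-1}]$ contains no nonzero homogeneous elements (monomials are units), yet $AB=\langle(1+x)^{2}\rangle\neq 0$. So your ``crucial leverage'' identity $AB=gr(A)\cap gr(B)$ fails, and with it the entire computation $AB\cap AC=gr(A)\cap gr(B)\cap gr(C)=A(B\cap C)$ in part (i), as well as the reverse inclusion $AB\subseteq(A\cap B)(A+B)$ in part (ii), which you derive by the same collapse to graded parts. (The pieces that do survive are the trivial inclusions $A(B\cap C)\subseteq AB\cap AC$ and $(A\cap B)(A+B)\subseteq AB$, and the identity $gr(B\cap C)=gr(B)\cap gr(C)$, which is correct but does not carry the argument.)

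The paper gives no proof here; it cites \cite{EEKR}, where the hard inclusions are established by working with the explicit generating sets of Theorem 3.3 --- writing each ideal as $I(H,S)+\sum_{i}\langle f_{i}(c_{i})\rangle$, using that the ideal generated by the vertices of a cycle without exits is isomorphic to $M_{\Lambda}(K[x,x^{-1}])$, and reducing the multiplicative identities to gcd/lcm computations with the polynomials $f_{i}$ in the principal ideal domain $K[x,x^{-1}]$, with Proposition \ref{graded Ideal Property} handling only the graded contributions. Any correct proof must engage with these non-graded polynomial generators; a purely graded-part argument cannot see them.
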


Note that the statements (i) and (ii) in the preceding theorem are the ideal
versions of well-known theorems in elementary number theory, namely, for any
three positive integers $a,b,c$, we have $a\cdot\gcd(b,c)=\gcd(ab,ac)$ and
$\gcd(a,b)\cdot\operatorname{lcm}(a,b)=ab$.

However, not all the characterizing properties of a Pr\"{u}fer domain hold in
a Leavitt path algebra. For instance, a domain $R$ is a Pr\"{u}fer domain if
and only if non-zero finitely generated ideals of $R$ are cancellative, that
is, if $A$ is a non-zero finitely generated ideal, then for any two ideals
$B,C$ of $R$, $AB=AC$ implies $B=C$. \ This property may not hold in a Leavitt
path algebra as the next example shows:

\begin{example}
Consider the graph $E%
\begin{array}
[c]{ccccc}%
\bullet &  &  &  & \\
\uparrow &  &  &  & \\
\bullet_{w} &  &  &  & \bullet_{u_{2}}\\
\downharpoonleft &  &  & \nearrow e_{1} & \\
\bullet_{v} & \longleftarrow & \bullet_{u_{1}} &  & \downarrow e_{2}\\
&  &  & \nwarrow e_{3} & \\
&  &  &  & \bullet_{u_{3}}%
\end{array}
$

Here $H=\{v\}$ is a hereditary saturated subset. Let $A=<H>$, the ideal
generated by $H$. Let $c$ denote the cycle $e_{1}e_{2}e_{3}$. Clearly $c$ has
no exits in $E\backslash H$. Let $B$ be the non-graded ideal $A+<p(c)>$, where
$p(x)=1+x\in K[x]$. Clearly $gr(B)=A$. Since $A$ is a graded ideal, we apply
Lemma \ref{graded Ideal Property} (a), to conclude that $AB=A\cap
B=A=A^{2}=AA$. But $A\neq B$.
\end{example}

\section{Prime, Radical, Primary and Irreducible ideals of a Leavitt path
algebra.}

In this section, we describe special types of ideals in $L$ such as the prime,
the irreducible, the primary and the radical (= semiprime) ideals using
graphical properties. While these concepts are independent for ideals in a
commutative ring, we show that the first three properties of ideals coincide
for graded ideals in the Leavitt path algebra $L$. We also show that a
non-graded ideal $I$ of $L$ is irreducible if and only if $I$ is a primary
ideal if and only if $I=P^{n}$, a power of a prime ideal $P$. This is useful
in the factorization of ideals in the next section. We also characterize the
radical ideals of $L$. It may be some interest to note that every graded ideal
of $L$ is a radical ideal.

The following description of prime ideals of $L$ was given in \cite{R1}.

\begin{theorem}
\label{R-1}(Theorem 3.2, \cite{R1}) \ An ideal $P$ of $L:=L_{K}(E)$ with
$P\cap E^{0}=H$ is a prime ideal if and only if $P$ satisfies one of the
following properties:

(i) $\ \ P=I(H,B_{H})$ and $E^{0}\backslash H$ is downward directed;\ 

(ii) $\ P=I(H,B_{H}\backslash\{u\})$, $v\geq u$ for all $v\in E^{0}\backslash
H$ and the vertex $u^{\prime}$ that corresponds to $u$ in $E\backslash
(H,B_{H}\backslash\{u\})$ is a sink;

(iii) $P$ is a non-graded ideal of the form $\ P=I(H,B_{H})+<p(c)>$, where $c
$ is a cycle without exits based at a vertex $u$ in $E\backslash(H,B_{H})$,
$v\geq u$ for all $v\in E^{0}\backslash H$ and $p(x)$ is an irreducible
polynomial in $K[x,x^{-1}]$ such that $p(c)\in P$.
\end{theorem}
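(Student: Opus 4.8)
The plan is to reduce the whole classification to one foundational fact: for an arbitrary graph $F$, the Leavitt path algebra $L_K(F)$ is a prime ring if and only if $F^0$ is downward directed. Granting this, I would first observe that since $\mathrm{gr}(P)\subseteq P$, primeness of $P$ in $L$ is equivalent to primeness of the image $\bar P:=P/\mathrm{gr}(P)$ in $\bar L:=L/\mathrm{gr}(P)$, and by the realization theorem $\bar L\cong L_K(F)$ where $F=E\setminus(H,S)$ and $\mathrm{gr}(P)=I(H,S)$. I would also use the fact, provable directly from $AB=A\cap B$ for graded ideals (Proposition~\ref{graded Ideal Property}) together with the $\mathbb{Z}$-grading, that the graded part of a prime ideal is again prime. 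Applying this to $\bar P$ shows $\mathrm{gr}(\bar P)=0$ is a prime ideal of $\bar L$, i.e.\ $L_K(F)$ is a prime ring, so $F^0=(E\setminus(H,S))^0$ is downward directed. This single consequence drives the entire argument, and the three cases come from separating $P$ graded versus non-graded and analyzing the structure of $F$.

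Suppose first $P$ is graded, so $\bar P=0$ and the only constraint is that $F$ be downward directed. Here I would examine the vertices of $F$: the primed vertices $u'$, one for each $u\in B_H\setminus S$, emit no edges in $E\setminus(H,S)$ and are therefore sinks. Since a downward directed graph can contain at most one sink (two distinct sinks have no common descendant), at most one primed vertex occurs, so either $S=B_H$ or $S=B_H\setminus\{u\}$ for a single breaking vertex $u$. If $S=B_H$, downward directedness of $F$ coincides with downward directedness of $E^0\setminus H$, giving (i); if $S=B_H\setminus\{u\}$, then $u'$ is the unique sink and downward directedness forces every vertex to reach it, which translates back to $v\geq u$ for all $v\in E^0\setminus H$ with $u'$ a sink, giving (ii). The converse in each case is immediate, since these configurations make $F$ downward directed and hence $L_K(F)$ prime.

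Now suppose $P$ is non-graded, so $\bar P\neq0$ with $\bar P\cap F^0=\emptyset$ and trivial graded part. By the orthogonal-generators theorem the non-gradedness forces $F$ to contain a cycle without exits $c$ based at some $u$, with $\bar P$ generated by polynomials $f_t(c_t)$ in such cycles. I would argue that downward directedness together with the presence of a cycle without exits makes the cycle unique and a common bottom: for any $v$ a common descendant of $v$ and $u$ must lie on $c$ and hence reach $u$, so $v\geq u$ for all $v$; two disjoint exit-free cycles, or a cycle together with a primed sink, would produce two incomparable bottoms and violate downward directedness, forcing the index set to be a singleton and $S=B_H$. The remaining step is to identify the prime ideal along the cycle: the ideal generated by the vertices of $c$ is isomorphic to $M_\Lambda\!\left(K[x,x^{-1}]\right)$, and under the Morita correspondence $M_\Lambda(R)\sim R$ prime ideals correspond to prime ideals; since the prime ideals of $K[x,x^{-1}]$ are exactly $\langle p(x)\rangle$ with $p$ irreducible, I conclude $\bar P=\langle p(c)\rangle$ with $p$ irreducible, i.e.\ $P=I(H,B_H)+\langle p(c)\rangle$ as in (iii). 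The converse reads this correspondence backwards.

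The step I expect to be the main obstacle is this last one: showing precisely that a non-graded prime $\bar P$ is generated by a single $p(c)$ with $p$ irreducible. This requires both establishing that $\bar P$ is supported only on the unique exit-free cycle (so the index set $T$ reduces to one element) and making the matrix-ring-over-$K[x,x^{-1}]$ identification genuinely rigorous in the presence of breaking vertices and the possibly infinitely many paths feeding into $u$, so that the transfer of primeness and of irreducibility across the Morita equivalence is exact. The foundational equivalence between primeness and downward directedness, and the stability of the graded part under taking primes, are the other technical inputs, but they are comparatively routine once set up.
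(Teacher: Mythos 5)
The survey states this theorem without proof, citing Theorem 3.2 of \cite{R1}, and your outline is essentially the argument of that reference: reduce to primeness of $L/\mathrm{gr}(P)$ via the fact that the graded part of a prime ideal of a $\mathbb{Z}$-graded ring is again prime, invoke the characterization of prime Leavitt path algebras by downward directedness of the vertex set of the quotient graph, split into the graded and non-graded cases, and settle the latter through the isomorphism of the ideal generated by an exit-free cycle with $M_{\Lambda}(K[x,x^{-1}])$ together with the Morita transfer of primeness. The steps you flag as delicate (uniqueness of the exit-free cycle, and the two-way transfer showing that $\langle p(c)\rangle$ is prime in all of $\bar{L}$ and not merely in $M$) are exactly where the cited proof does its work, and your reductions there are sound.
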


Recall, an ideal $I$ of a ring $R$ is called an \textbf{irreducible ideal }if,
for ideals $A,B$ of $R$, $I=A\cap B$ implies that either $I=A$ or $I=B$. Given
an ideal $I$, the \textbf{radical of the ideal }$I$ , denoted by\textbf{\ }%
$Rad(I)$ or $\sqrt{I}$, is the intersection of all prime ideals containing
$I$. A useful property is that if $a\in Rad(I)$, then $a^{n}\in I$ for some
integer $n\geq0$ (The proof of this property is given in Theorem 10.7 of
\cite{Lam} for non-commutative rings with identity, but the proof also works
for rings without identity but with local units). If $Rad(I)=I$ for an ideal
$I$, then $I$ is called a \textbf{radical ideal }or\textbf{\ }a\textbf{
semiprime} \textbf{ideal}. An ideal $I$ of $R$ is said to be a \textbf{primary
ideal }if, for any two ideals $A,B$, if $AB\subseteq I$ and $A\nsubseteqq I$,
then $B\subseteq Rad(I)$.

\textbf{Remark}: We note in passing that for any graded ideal $I$ of $L$, say
$I=I(H,S)$, $Rad(I)=I$. Because, $Rad(I)/I$ is a nil ideal in $L/I$ and $L/I$,
being isomorphic to the Leavitt path algebra $L_{K}(E\backslash(H,S))$, has no
non-zero nil ideals.

We now point out an interesting property of graded ideals of $L$.

\begin{theorem}
(\cite{R3}) Suppose $I$ is a graded ideal of $L$. Then the following are equivalent:

(i) $\ \ I$ is a primary ideal;

(ii) $\ I$ is a prime ideal;

(iii) $I$ is an irreducible ideal.
\end{theorem}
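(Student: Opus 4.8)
The plan is to prove the cyclic chain of implications $(ii)\Rightarrow(iii)\Rightarrow(i)\Rightarrow(ii)$, exploiting throughout the special structure of graded ideals recorded in Proposition \ref{graded Ideal Property}, together with the Remark that every graded ideal satisfies $Rad(I)=I$.

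First I would dispose of $(ii)\Rightarrow(iii)$. This is essentially formal and holds in any commutative-ideal-multiplication ring: if $I$ is prime and $I=A\cap B$, then $AB\subseteq A\cap B=I$, so primeness of $I$ forces $A\subseteq I$ or $B\subseteq I$; since $I\subseteq A$ and $I\subseteq B$ always, we conclude $I=A$ or $I=B$, which is irreducibility. No gradedness is needed here.

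Next, for $(i)\Rightarrow(ii)$, suppose $I$ is a primary graded ideal and $AB\subseteq I$ with $A\not\subseteq I$; I must show $B\subseteq I$. By the primary hypothesis $B\subseteq Rad(I)$, and here is where gradedness pays off: by the Remark, $Rad(I)=I$, so $B\subseteq I$ immediately. Thus for graded ideals the primary condition collapses directly to the prime condition because the radical adds nothing. I expect this to be the cleanest of the three implications, again using a previously stated result as a black box.

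The remaining implication, $(iii)\Rightarrow(i)$, that graded irreducible implies primary, is the step I expect to be the main obstacle, since it is the only one that genuinely uses the idempotent/multiplicative behavior of graded ideals rather than a soft radical argument. My intended approach is to suppose $I$ is graded and irreducible, take ideals $A,B$ with $AB\subseteq I$ and $A\not\subseteq I$, and aim to show $B\subseteq Rad(I)=I$. The key device is Proposition \ref{graded Ideal Property}(i)(a): because $I$ is graded, intersection with $I$ behaves multiplicatively, and I would try to write a decomposition of $I$ as an intersection $I=(I+A)\cap(I+B)$ (or a closely related pair), using $AB\subseteq I$ together with $I\cap(A+B)$-type identities and the arithmetical (distributive) law from Theorem \ref{LPA is Arithmetic} to verify the two terms really meet in $I$. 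Irreducibility then forces one factor to equal $I$; since $A\not\subseteq I$ rules out $I=I+A$, we must have $I=I+B$, i.e. $B\subseteq I$. The delicate point will be justifying that the chosen intersection genuinely equals $I$ and not merely contains it — this is exactly where I would lean on the graded identities $AI=A\cap I$ and distributivity to control the cross terms, and I would check carefully that no non-graded pathology (of the kind exhibited in the cancellation Example above) sneaks in, which is precisely why the hypothesis that $I$ is graded is indispensable.
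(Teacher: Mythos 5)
The survey states this theorem only with a citation to \cite{R3} and gives no proof, so there is nothing in-paper to compare against; judging your proposal on its own terms: the cycle you chose is the right one, and two of the three implications are complete. Your (ii)$\Rightarrow$(iii) is the standard formal argument (it needs nothing beyond $AB\subseteq A\cap B$ for two-sided ideals), and (i)$\Rightarrow$(ii) is correctly reduced to the Remark that $Rad(I)=I$ for graded $I$. The decomposition $I=(I+A)\cap(I+B)$ you propose for (iii)$\Rightarrow$(i) is also the right device.

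The gap is in how you propose to verify that $(I+A)\cap(I+B)$ actually equals $I$. Distributivity (Theorem \ref{LPA is Arithmetic}) gives $(I+A)\cap(I+B)=I+(A\cap B)$, so you are reduced to showing $A\cap B\subseteq I$; but the tools you name will not produce this: the identity $AI=A\cap I$ requires one factor to be graded (neither $A$ nor $B$ is), and $AB\subseteq I$ does not imply $A\cap B\subseteq I$ in general. The missing ingredient is to invoke the radical property of the graded ideal $I$ a second time: $\bigl((I+A)\cap(I+B)\bigr)^{2}\subseteq(I+A)(I+B)=I^{2}+IB+AI+AB\subseteq I$, hence $(I+A)\cap(I+B)\subseteq Rad(I)=I$, and the reverse inclusion is trivial. (This also bypasses distributivity altogether.) Irreducibility then forces $I=I+B$, i.e. $B\subseteq I=Rad(I)$, as you intended. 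Note that this same computation shows a graded irreducible ideal is already prime, so you could shorten the cycle to (iii)$\Rightarrow$(ii)$\Rightarrow$(i)$\Rightarrow$(ii)'s converse, since prime trivially implies primary.
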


The next theorem extends the above result to arbitrary ideals of $L$.

\begin{theorem}
\label{Irred = primary =prime power}(\cite{R3}) Suppose $I$ is a non-graded
ideal of $L$. Then the following are equivalent:

(i) $\ \ \ I$ is a primary ideal;

(ii) \ $\ I=P^{n}$, a power of a prime ideal $P$ for some $n\geq1$;

(iii) \ $I$ is an irreducible ideal.
\end{theorem}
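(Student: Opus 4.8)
The plan is to prove the three implications $(i)\Rightarrow(ii)\Rightarrow(iii)\Rightarrow(i)$, relying heavily on the structure of non-graded ideals supplied by Theorem (\cite{R2}) and the description of prime ideals in Theorem \ref{R-1}. The crucial starting observation is that since $I$ is \emph{non-graded}, its graded part $gr(I)=A$ is strictly smaller than $I$, and the orthogonal generating set forces the ``extra'' non-graded generators to come from polynomials $f_t(c_t)$ evaluated on cycles without exits in the quotient graph $E\backslash(H,S)$. I would first establish a normal-form lemma: a non-graded primary (or irreducible) ideal can involve only \emph{one} such cycle, so that $I=A+\langle f(c)\rangle$ for a single cycle $c$ without exits and a single polynomial $f$. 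The reason is that distinct cycles $c_1,c_2$ without exits generate ideals whose non-graded contributions are ``independent,'' and one can exhibit a nontrivial intersection decomposition (contradicting irreducibility) or a product/containment failure (contradicting primary-ness) if two cycles were present.

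For $(i)\Rightarrow(ii)$, assuming $I$ is primary and non-graded, I would use the normal form $I=A+\langle f(c)\rangle$ with $c$ a cycle without exits in $E\backslash(H,S)$. Working in the quotient $L/A\cong L_K(E\backslash(H,S))$ (Theorem \cite{RT}), the relevant corner containing the cycle $c$ behaves like $K[x,x^{-1}]$ (Example \ref{Laurent}), a PID. In a PID an ideal is primary exactly when it is a prime power, so $f(x)=p(x)^n$ for an irreducible $p$. I would then check that the prime ideal $P=A+\langle p(c)\rangle$ described in Theorem \ref{R-1}(iii) satisfies $P^n=A+\langle p(c)^n\rangle=I$; the computation of $P^n$ uses that $A$ is graded so $AP=A\cap P=A$ and $A^2=A$ by Proposition \ref{graded Ideal Property}, which collapses the cross terms and leaves exactly the $\langle p(c)^n\rangle$ contribution above $A$.

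For $(ii)\Rightarrow(iii)$, given $I=P^n$, I would argue that $P^n$ is irreducible by reducing modulo $A=gr(P)$ to the Laurent-polynomial corner, where $\langle p(x)^n\rangle$ is irreducible as an ideal of a PID (powers of a single prime cannot be written as a proper intersection of two larger ideals). Pulling this back, any decomposition $I=B\cap C$ with $B,C\supsetneq I$ would, after intersecting with the cycle corner and using distributivity (Theorem \ref{LPA is Arithmetic}), produce a corresponding proper decomposition of $\langle p^n\rangle$, a contradiction. Finally, $(iii)\Rightarrow(i)$ is the implication I expect to require the most care: I would show an irreducible non-graded ideal must again have the single-cycle normal form (same argument as the normal-form lemma, since two independent cycles would split $I$ as an intersection), and then that irreducibility in the $K[x,x^{-1}]$-corner forces $f=p^n$, which is precisely the primary condition back in $L$.

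The main obstacle will be the normal-form / single-cycle reduction and, tied to it, the passage between ideals of $L$ and ideals of the Laurent-corner $K[x,x^{-1}]$: I must verify that intersections, products, and the primary/irreducible conditions all transfer faithfully across the isomorphism $L/A\cong L_K(E\backslash(H,S))$ and the localization at the cycle $c$, despite $L$ having no identity and only local units. Proposition \ref{graded Ideal Property} (giving $A^2=A$ and $AB=A\cap B$ for the graded part $A$) will be the workhorse that makes these transfers clean, since it lets me treat $A$ as an ``absorbing'' component and concentrate all the genuinely non-graded behavior in the polynomial $f(c)$.
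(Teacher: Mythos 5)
The paper itself gives no proof of this theorem (it is quoted from \cite{R3}), so your proposal can only be measured against the argument in that reference. Your overall skeleton is the right one and matches the known proof's machinery: reduce a non-graded ideal to the normal form $gr(I)+\langle f(c)\rangle$ supplied by the generator theorem, pass to $L/gr(I)\cong L_K(E\backslash(H,S))$ where the ideal generated by $c^0$ is $M_{\Lambda}(K[x,x^{-1}])$, use Morita invariance to work in the PID $K[x,x^{-1}]$, and use Proposition \ref{graded Ideal Property} ($A^2=A$, $AB=A\cap B$ for graded $A$) to get $P^n=gr(I)+\langle p(c)^n\rangle$. The single-cycle reduction you propose is also genuinely part of the argument.

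The gap is that your plan only controls the \emph{polynomial} part of $I$ and never controls the \emph{graded} part. For $(i)\Rightarrow(ii)$ you need $P=gr(I)+\langle p(c)\rangle$ to actually be a prime ideal, and by Theorem \ref{R-1}(iii) that forces two graph-theoretic conditions: $gr(I)=I(H,B_H)$ (i.e.\ $S=B_H$, so $I$ contains \emph{all} the elements $w^{H}$ for breaking vertices $w$), and $v\geq u$ for every $v\in E^{0}\backslash H$, where $u$ is the base of $c$. A non-graded ideal can perfectly well have the normal form $I(H,S)+\langle p(c)^n\rangle$ with $S\subsetneq B_H$ or with some vertex outside $H$ not connecting to $c$; for such an ideal $gr(I)+\langle p(c)\rangle$ is not prime, so your candidate $P$ fails, and the theorem is only saved because primary-ness (respectively irreducibility) \emph{rules these configurations out}. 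That exclusion is the substantive half of the proof --- e.g.\ one must exhibit, for a vertex $v\not\geq u$ or a missing $w^{H}$, a pair of ideals violating the primary condition, or an intersection decomposition $I=(I+\langle H'\rangle)\cap(I+\langle c^{0}\rangle)$ with $H'$ the hereditary saturated set of vertices not connecting to $u$, violating irreducibility. Your proposal addresses the analogous splitting only for the case of two distinct cycles, so as written the implications $(i)\Rightarrow(ii)$ and $(iii)\Rightarrow(ii)$ do not close. (A smaller point: the corner is $M_{\Lambda}(K[x,x^{-1}])$ rather than $K[x,x^{-1}]$ itself, so you should explicitly invoke Morita invariance of the ideal lattice, as the paper does elsewhere via [\cite{AAS}, Lemma 2.7.1].)
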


The final result of this section describes the radical (also known as
semiprime) ideals of $L$.

\begin{theorem}
\label{semiprime ideals}(\cite{AMR}) Let $A$ be an arbitrary ideal of $L$ with
$A\cap E^{0}=H$ and $S=\{v\in B_{H}:v^{H}\in A\}$. Then the
following\ properties are equivalent:

(i) $\ A$ is a radical ideal of $L$;

(ii) $A=I(H,S)+%
{\displaystyle\sum\limits_{i\in Y}}
<f_{i}(c_{i})>$, where $Y$ is an index set which may be empty, for each $i\in
Y$, $c_{i}$ is a cycle without exits based at a vertex $v_{i}$ in
$E\backslash(H,S)$ and $f_{i}(x)$ is a polynomial with its constant term
non-zero which is a product of distinct irreducible polynomials in
$K[x,x^{-1}]$.
\end{theorem}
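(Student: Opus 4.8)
The plan is to prove the equivalence by characterizing exactly when the ideal $A$ described by its generators coincides with its own radical. The structure theorem for ideals (Theorem stated earlier, the one attributed to \cite{R2}) tells us that every non-zero ideal $A$ with $A\cap E^0=H$ and $S=\{v\in B_H:v^H\in A\}$ has the form
\[
A=I(H,S)+\sum_{i\in Y}\langle f_i(c_i)\rangle,
\]
where each $c_i$ is a cycle without exits in $E\backslash(H,S)$ based at some $v_i$, and $f_i(x)\in K[x]$ has non-zero constant term. So the only freedom lies in the polynomials $f_i$, and the entire content of the theorem is that $A=\mathrm{Rad}(A)$ if and only if each $f_i$ factors as a product of \emph{distinct} irreducible polynomials in $K[x,x^{-1}]$ (i.e.\ $f_i$ is squarefree up to units). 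The graded summand $I(H,S)$ contributes nothing to this condition, since by the Remark preceding the theorem every graded ideal already satisfies $\mathrm{Rad}(I)=I$.

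To carry this out, I would first reduce to the local behavior at a single cycle. The key reduction is that the algebra localized at a cycle without exits behaves like the Laurent polynomial ring $K[x,x^{-1}]$ (this is exactly Example \ref{Laurent}, and the cycle-without-exits hypothesis is what makes the corner generated by the cycle commutative and isomorphic to a Laurent ring). Concretely, for the ideal generated by $f_i(c_i)$ at the cycle $c_i$, the question of whether it equals its radical transfers to the question of whether $\langle f_i(x)\rangle$ equals its radical in $K[x,x^{-1}]$. Since $K[x,x^{-1}]$ is a PID, $\mathrm{Rad}(\langle f\rangle)=\langle \mathrm{rad}(f)\rangle$ where $\mathrm{rad}(f)$ is the product of the distinct irreducible factors of $f$; and $\langle f\rangle$ is radical precisely when $f$ is squarefree, i.e.\ a product of distinct irreducibles. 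The forward direction (i)$\Rightarrow$(ii) then amounts to showing that if some $f_i$ had a repeated factor, then $A$ would fail to be semiprime: one produces an element lying in $\mathrm{Rad}(A)$ but not in $A$, built from $g(c_i)$ where $g=\mathrm{rad}(f_i)$, using the property that $a\in\mathrm{Rad}(I)$ forces $a^n\in I$. The reverse direction (ii)$\Rightarrow$(i) shows that when every $f_i$ is squarefree, $A$ is an intersection of prime ideals; here I would invoke the prime-ideal description (Theorem \ref{R-1}) to write each $\langle f_i(c_i)\rangle$-contribution as an intersection of the non-graded primes $I(H,B_H)+\langle p(c_i)\rangle$ ranging over the distinct irreducible factors $p$ of $f_i$, together with the graded part $I(H,S)$ which is itself semiprime.

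The main obstacle I anticipate is the passage between the global ideal $A$ in the non-commutative algebra $L$ and the local Laurent-polynomial computation at each cycle. Two technical points demand care. First, one must check that the radical commutes with this decomposition — that $\mathrm{Rad}\big(I(H,S)+\sum_i\langle f_i(c_i)\rangle\big)$ genuinely splits as the graded part plus the sum of the local radicals — and this relies on the orthogonality of the generating set $Y$ from the structure theorem (the $c_i$ sit over disjoint cycles, and distinct generators are orthogonal), so that the contributions do not interfere. Second, one must verify that the cycle-without-exits condition really does make the relevant corner of $L$ isomorphic to $K[x,x^{-1}]$, so that irreducibility and squarefreeness in $K[x,x^{-1}]$ faithfully control the semiprimeness in $L$; the subtlety is that exits would destroy this commutative picture and allow extra ideals. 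Once these two points are secured, the equivalence follows by combining the PID fact about $K[x,x^{-1}]$ with the prime-ideal classification, and the proof reduces to bookkeeping over the index set $Y$.
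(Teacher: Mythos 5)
The paper itself gives no proof of this theorem; it is a survey statement cited from \cite{AMR}, so there is nothing internal to compare against and I am judging your sketch on its own merits. Your overall strategy is the right one and consistent with the machinery the survey uses elsewhere: by the generation theorem from \cite{R2}, every ideal already has the shape in (ii) except possibly for squarefreeness of the $f_{i}$, the graded part is automatically radical, and the whole question localizes to the ideal generated by $c_{i}^{0}$, where radicality of $<f(x)>$ in the PID $K[x,x^{-1}]$ is exactly squarefreeness. The (i)$\Rightarrow$(ii) half as you describe it (take $g=\mathrm{rad}(f_{i})$, note $<g(c_{i})>^{n}\subseteq A$ but $g(c_{i})\notin A$) goes through.

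There are, however, two genuine gaps. First, and most concretely, your choice of primes in (ii)$\Rightarrow$(i) is wrong: by Theorem \ref{R-1}(iii), $I(H,B_{H})+<p(c_{i})>$ is prime only if $v\geq v_{i}$ for \emph{every} $v\in E^{0}\backslash H$, and this fails as soon as $|Y|\geq2$ (the base $v_{j}$ of another cycle $c_{j}$ cannot reach $v_{i}$, since every exit of $c_{j}$ in $E$ lands in $H$ and $H$ is hereditary). You must enlarge the hereditary saturated set separately for each cycle, e.g.\ to $H_{i}=\{w\in E^{0}:w\not\geq v_{i}\}$, use the primes $I(H_{i},B_{H_{i}})+<p(c_{i})>$ for $p$ ranging over the irreducible factors of $f_{i}$, and then actually verify that their intersection with the graded primes containing $A$ equals $A$; this last verification is where the orthogonality of the generating set is really used, and you assert rather than prove that the radical splits across the orthogonal decomposition. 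Second, a smaller point: the ideal generated by $c_{i}^{0}$ is isomorphic to $M_{\Lambda}(K[x,x^{-1}])$ (as the survey notes via Lemma 2.7.1 of \cite{AAS}), not to $K[x,x^{-1}]$ itself; the reduction still works because the lattice of ideals is preserved under this correspondence, but that is the statement you need to invoke, not a ring isomorphism with the Laurent polynomial ring.
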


\section{Factorization of Ideals in \textit{L}}

As noted in the Introduction, ideals in an arithmetical ring admit interesting
representations as products of special types of ideals (\cite{F-3},
\cite{F-4}, \cite{F-5}). In this section, we explore the existence and the
uniqueness of factorizations of an arbitrary ideal in a Leavitt path algebra
$L$ as a product of prime ideals and as a product of irreducible/primary
ideals. The prime factorization of graded ideals of $L$ seems to influence
that of the non-graded ideals in $L$. Indeed, an ideal $I$ is a product of
prime ideals in $L$ if and only its graded part $gr(I)$ has the same property
and, moreover, $I/gr(I)$ is finitely generated with a generating set of
cardinality no more than the number of distinct prime ideals in an irredundant
factorization of $gr(I)$. It is interesting to note that if $I$ is a graded
ideal and if $I=P_{1}\cdot\cdot\cdot P_{n}$ is an irredundant product of prime
ideals, then necessarily each of the ideals $P_{j}$ must be graded. We also
show that $I$ is an intersection of irreducible ideals if and only if $I$ is
an intersection of prime ideals. If $L$ is the Leavitt path algebra of a
finite graph or, more generally, if $L$ is two-sided noetherian or two-sided
artinian, then every ideal of $L$ is shown to be a product of prime ideals. We
also give necessary and sufficient conditions under which every non-zero ideal
of $L$ is a product of prime ideals, that is when $L$ is a generalized ZPI
ring. We end this section by proving for $L$ an analogue of the Krull's
theorem on the intersection of powers of an ideal.

We begin with the following useful proposition.

\begin{proposition}
\label{gr part prime} (\cite{R3}) Suppose $I$ is a non-graded ideal of $L$. If
$gr(I)$ is a prime ideal. then $I$ is a product of prime ideals.
\end{proposition}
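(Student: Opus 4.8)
The plan is to use the orthogonal generating-set description of ideals (\cite{R2}) to write $I$ explicitly, then read off a prime factorization whose factors come from the irreducible factors of a single polynomial evaluated on a cycle. Writing $H=I\cap E^{0}$ and $S=\{u\in B_{H}:u^{H}\in I\}$, so that $gr(I)=I(H,S)$, the generating-set theorem gives
\[
I=I(H,S)+\sum_{t\in T}\langle f_{t}(c_{t})\rangle ,
\]
with each $c_{t}$ a cycle without exits in $E\backslash(H,S)$ and each $f_{t}$ of nonzero constant term. Since $gr(I)=I(H,S)$ is a graded prime ideal, Theorem~\ref{R-1} forces it to be of type (i) or (ii); in either case $L/gr(I)\cong L_{K}(E\backslash(H,S))$ is a prime ring, so the vertex set of $E\backslash(H,S)$ is downward directed. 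I would then rule out type (ii): there the quotient graph carries a sink $u'$ to which every vertex connects, which is incompatible with the existence of a cycle without exits (forced by $T\neq\emptyset$, i.e.\ by $I$ being non-graded, since a cycle vertex cannot reach a sink off the cycle). Hence $S=B_{H}$, the set $E^{0}\backslash H$ is downward directed, and $A:=gr(I)=I(H,B_{H})$.

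Next I would show $T$ is a singleton: downward directedness forces a unique cycle without exits (two such cycles would admit a common descendant, which must lie on both and hence make them coincide), and every $v\in E^{0}\backslash H$ reaches its base $u$. Thus $I=A+\langle f(c)\rangle$ for a single $f$. Factoring $f=a\,x^{s}\prod_{j=1}^{k}p_{j}^{m_{j}}$ into distinct irreducibles in $K[x,x^{-1}]$, I set $P_{j}:=A+\langle p_{j}(c)\rangle$. Each $P_{j}$ satisfies the hypotheses of Theorem~\ref{R-1}(iii): the cycle $c$ has no exits and is based at $u$ with $v\geq u$ for all $v\in E^{0}\backslash H$, and $p_{j}$ is irreducible in $K[x,x^{-1}]$. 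Hence each $P_{j}$ is prime, and the claim becomes $I=\prod_{j=1}^{k}P_{j}^{m_{j}}$.

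The main obstacle is establishing this product identity. Because $A$ is graded with $A\subseteq P_{j}$, Proposition~\ref{graded Ideal Property} gives $A\,P_{j}=A$ for each $j$, whence $A\,\prod_{j}P_{j}^{m_{j}}=A$ and so $A\subseteq\prod_{j}P_{j}^{m_{j}}$. Consequently every product in sight contains $A$ and may be computed in the quotient $\bar{L}=L/A\cong L_{K}(E\backslash(H,B_{H}))$, where $\bar{c}$ is the unique cycle without exits, $\bar{c}$ is invertible in the corner $u\bar{L}u\cong K[x,x^{-1}]$, and every path flows to $u$. The crux is the multiplicativity lemma
\[
\langle g(\bar{c})\rangle\,\langle h(\bar{c})\rangle=\langle (gh)(\bar{c})\rangle
\]
for $g,h$ of nonzero constant term: the inclusion $\supseteq$ is immediate from $g(\bar{c})\,u\,h(\bar{c})=g(\bar{c})h(\bar{c})$, while the reverse inclusion is the real work, requiring one to push an arbitrary product $\sum a_{i}\,g(\bar{c})\,b_{i}\,h(\bar{c})\,d_{i}$ into the corner $u\bar{L}u$ and reduce to the elementary identity $(g)(h)=(gh)$ in the principal ideal domain $K[x,x^{-1}]$.

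Granting the lemma, induction yields $\prod_{j}\langle p_{j}(\bar{c})\rangle^{m_{j}}=\langle\prod_{j}p_{j}(\bar{c})^{m_{j}}\rangle=\langle f(\bar{c})\rangle=\bar{I}$, the unit $a\,x^{s}$ being absorbed since $\bar{c}$ is invertible in the corner. Pulling back through $\bar{L}=L/A$ (using $A\subseteq\prod_{j}P_{j}^{m_{j}}$ and $A\subseteq I$) gives $\prod_{j}P_{j}^{m_{j}}=I$, exhibiting $I$ as a product of prime ideals and completing the argument. The only genuinely technical point is the reverse inclusion of the multiplicativity lemma; the remaining steps are structural consequences of the graph description and the idempotency of graded ideals.
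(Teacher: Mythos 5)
Your argument is correct and follows essentially the same route as the cited proof in \cite{R3}: use primeness of $gr(I)=I(H,B_{H})$ to force downward directedness, hence a single cycle $c$ without exits, write $I=gr(I)+\langle f(c)\rangle$, factor $f$ into irreducibles, and multiply the resulting type-(iii) primes using $A\,P_{j}=A$ together with the identification of the ideal generated by $c^{0}$ in $L/gr(I)$ (equivalently the corner $u\bar{L}u\cong K[x,x^{-1}]$) to get $\langle g(\bar{c})\rangle\langle h(\bar{c})\rangle=\langle(gh)(\bar{c})\rangle$. The one step you defer --- the reverse inclusion of that multiplicativity lemma via the corner reduction --- is exactly the standard structural fact the cited proof also invokes, so nothing essential is missing.
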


Using this, we obtain the following main factorization theorem.

\begin{theorem}
\label{gr(I) product of primes => I product of primes} (\cite{R3}) Let $E$ be
an arbitrary graph. For a non-graded ideal $I$ of $L:=L_{k}(E)$, the following
are equivalent:

(i) $\ \ \ I$ is a product of prime ideals;

(ii) $\ I$ is a product of primary ideals;

(iii) $I$ is a product of irreducible ideals;

(iv) $gr(I)$ is a product of (graded) prime ideals;

(v) $gr(I)=P_{1}\cap\cdot\cdot\cdot\cap P_{m}$ is an irredundant intersection
of $m$ graded prime ideals $P_{j}$ and $I/gr(I)$ is generated by at most $m$
elements and is of the form $I/gr(I)=%
{\displaystyle\bigoplus\limits_{r=1}^{k}}
<f_{r}(c_{r})>$ where $k\leq m$ and, for each $r=1\cdot\cdot\cdot k$, $c_{r}$
is a cycle without exits in $E^{0}\backslash I$ and $f_{r}(x)\in K[x]$ is a
polynomial of smallest degree such that $f_{r}(c_{r})\in I$.
\end{theorem}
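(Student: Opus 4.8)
The plan is to separate the six-way equivalence into two groups that rely on genuinely different ideas: the block $(i)\Leftrightarrow(ii)\Leftrightarrow(iii)$, which is purely a factor-by-factor application of the structure theorems, and the block $(i)\Leftrightarrow(iv)\Leftrightarrow(v)$, which rests on how the operator $gr(-)$ interacts with products and on the graph geometry of exit-free cycles. For $(i)\Leftrightarrow(ii)\Leftrightarrow(iii)$ I would argue one factor at a time. Every prime ideal is primary (if $AB\subseteq P$ and $A\nsubseteqq P$ then $B\subseteq P=Rad(P)$), giving $(i)\Rightarrow(ii)$. For $(ii)\Rightarrow(iii)$ I would show each primary factor is irreducible: a graded primary ideal is prime, hence irreducible, by the theorem asserting that the primary, prime and irreducible properties coincide for graded ideals, while a non-graded primary ideal is irreducible by Theorem \ref{Irred = primary =prime power}. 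For $(iii)\Rightarrow(i)$ I would show each irreducible factor is a product of primes: a graded irreducible ideal is prime, and a non-graded irreducible ideal equals $P^{n}$ by Theorem \ref{Irred = primary =prime power}, hence is a product of primes. Applying these observations to each factor of the given product closes the loop.

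For $(i)\Rightarrow(iv)$ the lemma I would isolate first is that $gr(AB)=gr(A)\cap gr(B)=gr(A)\,gr(B)$ for all ideals $A,B$. Monotonicity of $gr(-)$ gives $gr(AB)\subseteq gr(A)\cap gr(B)$; conversely $gr(A)\,gr(B)$ is a graded ideal contained in $AB$, hence in $gr(AB)$, and it equals $gr(A)\cap gr(B)$ by Proposition \ref{graded Ideal Property}(ii). Iterating, $gr(Q_{1}\cdots Q_{s})=\bigcap_{i}gr(Q_{i})$ whenever $I=Q_{1}\cdots Q_{s}$ is a product of primes. By the prime classification in Theorem \ref{R-1}, $gr(Q_{i})$ is always a graded prime: if $Q_{i}$ is a non-graded prime of type (iii) then $gr(Q_{i})=I(H,B_{H})$, which is graded prime because $E^{0}\setminus H$ is downward directed. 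Thus $gr(I)=\bigcap_{i}gr(Q_{i})$ is an intersection of graded primes, equivalently a product of graded primes by Proposition \ref{graded Ideal Property}(ii), which is $(iv)$.

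The substance of the theorem is $(iv)\Rightarrow(v)\Rightarrow(i)$. Given $(iv)$, I would write $gr(I)$ as an irredundant intersection $P_{1}\cap\cdots\cap P_{m}$ of graded primes and use the orthogonal generating set of \cite{R2} to get $I=gr(I)+\sum_{r}\langle f_{r}(c_{r})\rangle$ over cycles without exits $c_{r}$ in $E\setminus(H,S)$; orthogonality of the generators forces $I/gr(I)=\bigoplus_{r}\langle f_{r}(c_{r})\rangle$ to be a direct sum. The step I expect to be the main obstacle is the bound $k\le m$. I would prove it by associating to each exit-free cycle $c_{r}$ a single graded prime among the $P_{j}$, namely the one whose complementary tail is downward directed and ``points at'' $c_{r}$; because a cycle without exits is terminal, downward directedness forces every vertex of that tail to reach the cycle's base, so the association is well defined and injective, whence $k\le m$. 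For $(v)\Rightarrow(i)$, after reindexing so that $c_{r}$ corresponds to $P_{r}$, I would set $J_{r}=P_{r}+\langle f_{r}(c_{r})\rangle$ and note $gr(J_{r})=P_{r}$ is prime, so Proposition \ref{gr part prime} makes each $J_{r}$ a product of primes; explicitly, factoring $f_{r}$ into irreducibles over $K[x,x^{-1}]$ exhibits $J_{r}$ as a product of non-graded primes of type (iii) and their powers. It then remains to verify $I=J_{1}\cdots J_{k}\,P_{k+1}\cdots P_{m}$: comparing graded parts by the lemma above gives $P_{1}\cap\cdots\cap P_{m}=gr(I)$, and comparing non-graded parts reduces, via distributivity (Theorem \ref{LPA is Arithmetic}) and the mutual orthogonality of the $f_{r}(c_{r})$ over disjoint exit-free cycles, to checking that multiplication by the remaining graded primes $P_{k+1},\dots,P_{m}$ leaves each contribution $\langle f_{r}(c_{r})\rangle$ intact. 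This final identity, together with the injective cycle-to-prime count, is where essentially all of the graph-theoretic bookkeeping is concentrated.
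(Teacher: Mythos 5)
Note first that the survey does not actually prove this theorem; it only cites \cite{R3} and indicates that the proof goes through Proposition \ref{gr part prime} and the graded-ideal machinery of Proposition \ref{graded Ideal Property}. Your outline is consistent with that route, and most of it is sound: the factor-by-factor treatment of $(i)\Leftrightarrow(ii)\Leftrightarrow(iii)$ is correct, and your lemma $gr(AB)=gr(A)\cap gr(B)=gr(A)\,gr(B)$ (sandwiching $gr(A)gr(B)\subseteq gr(AB)\subseteq gr(A\cap B)\subseteq gr(A)\cap gr(B)$ and invoking Proposition \ref{graded Ideal Property}(ii)) is a clean and valid way to get $(i)\Rightarrow(iv)$, together with the observation from Theorem \ref{R-1} that the graded part of a type-(iii) prime is a type-(i) prime.

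The one place where your sketch has a real gap is the ``well defined'' half of the cycle-to-prime correspondence, which is exactly what your final identity $I=J_{1}\cdots J_{k}P_{k+1}\cdots P_{m}$ depends on. Your expansion of that product via distributivity works only if $f_{r}(c_{r})\in P_{j}$ for every $j\neq r$, i.e.\ only if $c_{r}^{0}\subseteq H_{j}$ for all but exactly one $j$; the argument you give (downward directedness of $E^{0}\setminus H_{j}$ forces every vertex outside $H_{j}$ to reach $c_{r}$) shows that any $P_{j}$ avoiding $c_{r}$ must have $H_{j}=\{v:v\not\geq c_{r}\}$, hence all such $P_{j}$ share the same vertex set --- but it does not rule out two distinct primes with the same $H_{j}$ and different breaking-vertex sets $S_{j}$, i.e.\ two type-(ii) primes $I(H,B_{H}\setminus\{u_{1}\})$ and $I(H,B_{H}\setminus\{u_{2}\})$ surviving irredundance. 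You need the extra observation that this configuration is impossible here: in such a prime every vertex of $E^{0}\setminus H$ reaches the breaking vertex $u_{i}$, the connecting path cannot enter $H$ (heredity), so the base of the exit-free cycle $c_{r}$ would reach $u_{i}$ in the quotient graph and hence $u_{i}$ would lie on $c_{r}$ --- contradicting that a breaking vertex is an infinite emitter while a vertex on an exit-free cycle emits exactly one edge. With that added, the associated prime is genuinely unique, $f_{r}(c_{r})$ lies in every other $P_{j}$, your product identity closes (the cross terms with two or more $A_{r}$'s fall into $gr(I)$ by the orthogonality/direct-sum statement), and the injection also yields $k\leq m$. So the proposal is correct in outline and in spirit matches the cited argument, but this uniqueness step is asserted rather than proved and is precisely where the proof could otherwise fail.
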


As a consequence of Theorem
\ref{gr(I) product of primes => I product of primes}, we obtain a number of corollaries.

\begin{corollary}
(\cite{R3}) Let $E$ be a finite graph, or more generally, let $E^{0}$ be
finite. Then every non-zero ideal of $L=L_{K}(E)$ is a product of prime ideals.
\end{corollary}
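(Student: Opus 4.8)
The plan is to reduce everything to graded ideals and then exploit that a finite vertex set forces the lattice of graded ideals to be finite. First I would dispose of the reduction: by Theorem~\ref{gr(I) product of primes => I product of primes}, a non-graded ideal $I$ is a product of prime ideals if and only if its graded part $gr(I)$ is a product of graded prime ideals, while a graded ideal equals its own graded part. Hence the corollary follows once I show that \emph{every non-zero graded ideal $J$ of $L$ is a product of graded prime ideals when $E^{0}$ is finite}, applying this to $gr(I)$ in the non-graded case and to $I$ itself in the graded case.

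Next I would record the finiteness input. By Tomforde's description (Theorem~\ref{graded Ideal}), every graded ideal has the form $I(H,S)$ with $H\subseteq E^{0}$ hereditary saturated and $S\subseteq B_{H}\subseteq E^{0}$. Since $E^{0}$ is finite, there are only finitely many such pairs $(H,S)$, so $L$ has only finitely many graded ideals, and in particular only finitely many graded prime ideals.

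The heart of the argument is to write $J$ as a finite intersection of graded primes. By the Remark preceding the graded prime/primary/irreducible theorem, every graded ideal satisfies $Rad(J)=J$, so $J$ is the intersection of all prime ideals containing it. I would then argue that the non-graded primes are redundant in this intersection. By the classification in Theorem~\ref{R-1}, a non-graded prime has the type-(iii) form $P=I(H,B_{H})+\langle p(c)\rangle$, whose graded part is exactly the type-(i) graded prime $I(H,B_{H})$; moreover, since $J$ is graded and contained in $P$, we have $J=gr(J)\subseteq gr(P)$. Thus every prime containing $J$ contains a \emph{graded} prime that still contains $J$, so $J$ equals the intersection of the graded primes lying above it. As there are only finitely many graded ideals, this is a finite intersection $J=Q_{1}\cap\cdots\cap Q_{m}$ of graded primes, and Proposition~\ref{graded Ideal Property}(ii) converts it into the product $J=Q_{1}\cdots Q_{m}$.

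The step I expect to be the main obstacle is the redundancy of the non-graded primes: one must be sure that $gr(P)=I(H,B_{H})$ really is prime (not merely graded) and that it still lies above $J$. Both points rest on reading off from Theorem~\ref{R-1} that the downward-directedness hypothesis attached to a type-(iii) prime is precisely the hypothesis making its graded part a type-(i) prime, together with the general fact that the largest graded ideal inside $P$ contains every graded ideal inside $P$, in particular $J$. Once this is in hand, the finiteness of the graded lattice and Proposition~\ref{graded Ideal Property}(ii) make the remainder routine.
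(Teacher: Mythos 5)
Your proof is correct and follows the route the paper itself indicates: the corollary is obtained from Theorem \ref{gr(I) product of primes => I product of primes} (the equivalence of (i) and (iv)) by showing that, when $E^{0}$ is finite, every graded ideal is a finite intersection --- hence, by Proposition \ref{graded Ideal Property}(ii), a product --- of graded prime ideals, and your passage from arbitrary primes to graded primes (using $J=gr(J)\subseteq gr(P)$ together with the observation from Theorem \ref{R-1} that the graded part of a type-(iii) prime is a type-(i) graded prime) is exactly the missing detail the paper leaves unstated. One small repair: drop the restriction to \emph{non-zero} graded $J$, since $gr(I)$ can be the zero ideal (e.g.\ for a non-graded ideal of $K[x,x^{-1}]$); your argument still covers $J=0$ because the prime radical of $L$ is zero, so $0$ is likewise the finite intersection of the graded prime ideals of $L$.
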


Using a minimal or maximal argument, the above corollary can be extended to
the case when the ideals of $L$ satisfy the DCC or ACC as noted below.

\begin{corollary}
(\cite{R3}) Suppose $L$ is two-sided artinian or two-sided noetherian. Then
every non-zero ideal of $L$ is a product of prime ideals.
\end{corollary}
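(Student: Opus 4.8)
The plan is to reduce the statement to graded ideals and there exploit the chain condition to produce \emph{finite} intersections of graded primes. Concretely, let $I$ be a non-zero ideal of $L$. If $I$ is non-graded, then Theorem~\ref{gr(I) product of primes => I product of primes} (the implication $(iv)\Rightarrow(i)$) shows that $I$ is a product of prime ideals as soon as its graded part $gr(I)$ is a product of graded prime ideals; and if $I$ is itself graded there is nothing to reduce. Hence it suffices to prove that every non-zero graded ideal $J$ of $L$ is a product of (graded) prime ideals. The only degenerate possibility, $gr(I)=0$ for a non-zero non-graded $I$, forces $I\cap E^{0}=\emptyset$, so by the structure theorem giving an orthogonal generating set for non-zero ideals, $I$ sits inside a direct sum of corners each isomorphic to a matrix ring over the Laurent polynomial ring $K[x,x^{-1}]$ of Example~\ref{Laurent}; there the classical PID factorization applies and this case is disposed of separately.

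Next I would represent a graded ideal as an intersection of graded primes. Since $J$ is graded it equals its own radical (recall the Remark observing that a graded ideal satisfies $Rad(I)=I$), so $J$ is semiprime and therefore $J=\bigcap_{\alpha}P_{\alpha}$, where the $P_{\alpha}$ range over the prime ideals containing $J$. Taking graded parts and using that $gr$ commutes with arbitrary intersections (the largest graded ideal inside an intersection is the intersection of the graded parts), I obtain $J=gr(J)=\bigcap_{\alpha}gr(P_{\alpha})$. The point is that each $gr(P_{\alpha})$ is graded prime: this is immediate when $P_{\alpha}$ is already graded, and when $P_{\alpha}$ is a non-graded prime of type (iii) in Theorem~\ref{R-1} one has $gr(P_{\alpha})=I(H,B_{H})$ with $E^{0}\setminus H$ downward directed, hence graded prime by type (i). Thus $J$ is an intersection of graded prime ideals $Q_{\alpha}:=gr(P_{\alpha})$, a priori an infinite one.

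The heart of the argument is to replace this by a finite intersection, and this is where the chain condition enters. In the two-sided artinian (DCC) case I would look at the family of all finite sub-intersections $\bigcap_{\alpha\in F}Q_{\alpha}$ with $F$ finite; DCC supplies a minimal member $\bigcap_{\alpha\in F_{0}}Q_{\alpha}$, and adjoining any further index cannot shrink it, so $\bigcap_{\alpha\in F_{0}}Q_{\alpha}\subseteq Q_{\beta}$ for every $\beta$, forcing $J=\bigcap_{\alpha\in F_{0}}Q_{\alpha}$. In the two-sided noetherian (ACC) case I would instead invoke the classical Noetherian decomposition (a maximal-counterexample argument) to write $J$ as a finite intersection of irreducible ideals $I_{1}\cap\cdots\cap I_{n}$; by Theorem~\ref{Irred = primary =prime power} and its graded counterpart each $I_{j}$ is a prime power $P_{j}^{m_{j}}$, and taking graded parts gives $J=\bigcap_{j}gr(P_{j}^{m_{j}})=\bigcap_{j}gr(P_{j})$, again a finite intersection of graded primes (one checks $gr(P^{m})=gr(P)$, which is $P$ itself when $P$ is graded by idempotency, and is $I(H,B_{H})$ for type (iii)). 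Finally, since a finite product of graded ideals coincides with their intersection by Proposition~\ref{graded Ideal Property}(ii), I conclude $J=\bigcap_{i}Q_{i}=\prod_{i}Q_{i}$, a product of prime ideals, which together with the first paragraph completes the proof.

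The step I expect to be the real obstacle is precisely the passage from an arbitrary to a finite intersection of graded primes. A naive ``peel off one prime at a time'' via the multiplication-ring property (Theorem~\ref{LPAs are Multiplication rings}) stalls, because for a graded prime $P\supseteq J$ one has $PJ=P\cap J=J$ by Proposition~\ref{graded Ideal Property}(a), so the cofactor never grows strictly. This idempotency of graded ideals is exactly why the problem must be recast as a finiteness statement about intersections rather than a descent on products, and why reducing to the graded part (where products collapse to intersections) is the right move. A secondary subtlety to watch is that Noether's irreducible components in the ACC case need not themselves be graded, which is why I pass to graded parts and rely on $gr$ carrying prime powers to graded primes.
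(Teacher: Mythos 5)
Your main argument is sound and is essentially the route the paper indicates (it gives only the one-line hint ``a minimal or maximal argument'' extending the finite-graph corollary, deferring details to \cite{R3}): reduce to the graded part via the implication (iv)$\Rightarrow$(i) of Theorem \ref{gr(I) product of primes => I product of primes}; write a graded ideal $J$ as $\bigcap_{\alpha}gr(P_{\alpha})$ using the facts that graded ideals are radical and that the graded part of any prime is a graded prime (types (i)--(iii) of Theorem \ref{R-1}); extract a finite sub-intersection by a minimal argument under DCC, or by Noetherian decomposition into irreducibles plus Theorem \ref{Irred = primary =prime power} under ACC; and finally convert the finite intersection of graded primes into a product via Proposition \ref{graded Ideal Property}(ii). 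The auxiliary identities you use, $gr\bigl(\bigcap_{\alpha}A_{\alpha}\bigr)=\bigcap_{\alpha}gr(A_{\alpha})$ and $gr(P^{m})=gr(P)$ (the latter from idempotency of graded ideals), both check out.

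The one genuinely flawed step is your separate treatment of the case $gr(I)=0$. Passing to the corner $M\cong M_{\Lambda}(K[x,x^{-1}])$ generated by the vertices of a cycle without exits and invoking ``classical PID factorization'' does not settle this case, because a prime ideal of the ring $M$ need not be a prime ideal of $L$: if $E$ contains two disjoint cycles $c_{1},c_{2}$ without exits (so $E^{0}$ is not downward directed towards $c_{1}$), the ideal $\langle u+c_{1}\rangle$ with $u$ the base of $c_{1}$ is prime in the corner $\langle c_{1}^{0}\rangle$ but fails condition (iii) of Theorem \ref{R-1}, and indeed $I(\{v:v\geq c_{1}^{0}\text{ fails}\})\cdot I(c_{1}^{0})=0\subseteq\langle u+c_{1}\rangle$ with neither factor contained in it. So a factorization inside $M$ is not a factorization into primes of $L$. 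Fortunately the detour is unnecessary: the zero ideal is graded and radical (the prime radical of $L$ vanishes), so your own argument applied to $J=0$ exhibits $0$ as a finite intersection, hence product, of graded primes under either chain condition, and condition (iv) of Theorem \ref{gr(I) product of primes => I product of primes} then applies to $I$ with $gr(I)=0$. Simply allow $J=0$ in your graded step and delete the corner-ring digression.
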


We now give the necessary and sufficient conditions under which $L$ is a
generalized ZPI ring, that is when every ideal of $L$ is a product of prime ideals.

\begin{theorem}
(\cite{R3}) Let $E$ be an arbitrary graph and let $L:=L_{K}(E)$. Then every
proper ideal of $L$ is a product of prime ideals if and only if every
homomorphic image of $L$ is either a prime ring or contains only finitely many
minimal prime ideals.
\end{theorem}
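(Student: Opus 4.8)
The plan is to prove the two implications separately, reducing everything to the behaviour of graded ideals via Theorem \ref{gr(I) product of primes => I product of primes}. For the forward implication, suppose every proper ideal of $L$ is a product of prime ideals, and let $L/I$ be a homomorphic image with $I\neq L$ (the case $I=L$ giving the zero ring being trivial), so $I$ is a proper ideal and $I=P_{1}\cdots P_{n}$ for prime ideals $P_{i}$. The prime ideals of $L/I$ correspond to the primes $Q$ of $L$ with $Q\supseteq I$, and this order isomorphism preserves minimality. If $Q\supseteq I=P_{1}\cdots P_{n}$, then writing the product as $P_{1}\cdot(P_{2}\cdots P_{n})$ and applying the defining property of a prime ideal repeatedly, some $P_{i}\subseteq Q$. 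Hence every prime over $I$ contains one of the finitely many $P_{i}$, so the minimal primes over $I$ form a subset of $\{P_{1},\ldots,P_{n}\}$, and $L/I$ has only finitely many minimal primes. This direction uses only the absorption property of prime ideals and needs no graph analysis.

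For the converse, assume every homomorphic image of $L$ is prime or has finitely many minimal primes. I would first show that every graded ideal $J$ is a product of prime ideals. Since $J$ is graded, $Rad(J)=J$, so $J$ equals the intersection of all primes containing it, that is, of the minimal primes over $J$. Applying the hypothesis to $L/J$: if $L/J$ is prime then $J$ is itself a graded prime and we are done; otherwise $J$ has only finitely many minimal primes $P_{1},\ldots,P_{m}$, and $J=P_{1}\cap\cdots\cap P_{m}$. The crucial point (discussed below) is that each $P_{i}$ is graded; granting this, Proposition \ref{graded Ideal Property}(ii) gives $J=P_{1}\cap\cdots\cap P_{m}=P_{1}\cdots P_{m}$, a product of graded primes. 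Now for a non-graded ideal $I$, its graded part $gr(I)$ is a graded ideal, hence a product of graded primes by what we have just shown; this is precisely condition (iv) of Theorem \ref{gr(I) product of primes => I product of primes}, and so by the equivalence (iv)$\Rightarrow$(i) there, $I$ is a product of prime ideals. Together with the graded case, every proper ideal of $L$ is a product of primes.

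The main obstacle is verifying that the minimal primes over a graded ideal $J$ are themselves graded. Here I would invoke the classification in Theorem \ref{R-1}: a prime ideal of $L$ is either graded (types (i) and (ii)) or a non-graded ideal $P=I(H,B_{H})+\langle p(c)\rangle$ of type (iii). For such a type (iii) prime, the condition $v\geq u$ for all $v\in E^{0}\backslash H$ forces $E^{0}\backslash H$ to be downward directed, so that $gr(P)=I(H,B_{H})$ is itself a graded prime of type (i), and it is strictly contained in $P$ since $P$ is non-graded. Consequently, if a type (iii) prime $P$ contains the graded ideal $J$, then $J=gr(J)\subseteq gr(P)\subsetneq P$ exhibits a smaller prime over $J$, so $P$ cannot be minimal over $J$. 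Hence every minimal prime over a graded ideal is of type (i) or (ii), i.e. graded, which is exactly what the argument above requires. Finally I would dispose of the boundary cases, namely the zero ideal (handled by the same argument, using that the prime radical of $L$ vanishes so that $(0)$ is radical) and the improper image $L/L=0$, to conclude that every proper ideal of $L$ is a product of prime ideals.
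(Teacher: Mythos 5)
The survey states this theorem only with a citation to \cite{R3} and supplies no proof of its own, so there is nothing to compare line by line; judged on its own terms, your argument is correct and is essentially the argument one would expect to assemble from the survey's toolkit. The forward direction is the standard absorption argument for a prime containing a finite product of ideals, and your converse correctly reduces everything to graded ideals via the implication (iv)$\Rightarrow$(i) of Theorem \ref{gr(I) product of primes => I product of primes}, using that a graded ideal equals its radical (hence is the intersection of its minimal primes, finitely many by hypothesis) together with Proposition \ref{graded Ideal Property}; the one genuinely delicate point is that the minimal primes over a graded ideal are themselves graded, and you settle it cleanly by noting that a type (iii) prime $P$ in Theorem \ref{R-1} has $gr(P)=I(H,B_{H})$ itself prime and strictly smaller, so such a $P$ can never be minimal over a graded ideal.
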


The next theorem states that an irredundant factorization of an ideal $A$ as a
product of prime ideals in $L$ is unique up to a permutation of the factors.
It also points out the interesting fact that if $A$ is a graded ideal, then
every factor in this irredundant factorization must also be a graded ideal.

Recall that $A=P_{1}\cdot\cdot\cdot P_{n}$ is an \textbf{irredundant product}
of the ideals $P_{i}$, if $A$ is not the product of a proper subset of the set
$\{P_{1},\cdot\cdot\cdot,P_{n}\}$.

\begin{theorem}
(\cite{EER}) (a) Suppose $A$ is an arbitrary ideal of $L$ and $A=P_{1}%
\cdot\cdot\cdot P_{m}=Q_{1}\cdot\cdot\cdot Q_{n}$ are two representations of
$A$ as irredundant products of prime ideals $P_{i}$and $Q_{j}$. Then $m=n$ and
$\{P_{1},\cdot\cdot\cdot,P_{m}\}=\{Q_{1},\cdot\cdot\cdot,Q_{n}\}$;

(b) If $A$ is a graded ideal of $L$ and if $A=P_{1}\cdot\cdot\cdot P_{m}$ is
an irredundant product of prime ideals $P_{j}$, then the ideals are all graded
and $A=P_{1}\cap\cdot\cdot\cdot\cap P_{m}$.
\end{theorem}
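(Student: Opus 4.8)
The plan is to show that both the graded and the non-graded data attached to $A$ are invariants of $A$, and that an irredundant prime factorization is completely reconstructed from these two pieces of data. Recall first that a graded prime is idempotent by Proposition \ref{graded Ideal Property}(i)(a), so in an irredundant product no graded prime can be repeated and, by the same token, any chain $P\subsetneq P'$ of graded primes among the factors would be redundant (since $PP'=P\cap P'=P$ by Proposition \ref{graded Ideal Property}(i)(a)). By contrast, a non-graded prime $P$ is of the form $I(H,B_{H})+\langle p(c)\rangle$ as in Theorem \ref{R-1}(iii), is never idempotent, and by Theorem \ref{Irred = primary =prime power} its powers $P^{n}$ are precisely the non-graded primary/irreducible ideals; these are the factors that may legitimately repeat. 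Thus I would separate the graded ``skeleton'' of the factorization from the non-graded ``polynomial'' part and prove uniqueness of each.

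For the graded skeleton I would pass to $gr(A)$, which is intrinsic to $A$. By Theorem \ref{gr(I) product of primes => I product of primes}, since $A=P_{1}\cdots P_{m}$ is a product of primes, $gr(A)$ is a product of graded primes and, by Proposition \ref{graded Ideal Property}(ii), this product equals the corresponding intersection, so $gr(A)=G_{1}\cap\cdots\cap G_{s}$ is an irredundant intersection of graded primes. Because this intersection is irredundant the $G_{l}$ are pairwise incomparable, and being primes containing $gr(A)$ they are exactly the minimal primes over $gr(A)$; as such they are uniquely determined by $gr(A)$, hence by $A$. I would then verify that the graded factors of any irredundant prime factorization of $A$ --- the genuinely graded factors together with the graded parts of the type-(iii) factors --- are precisely these minimal primes over $gr(A)$. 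The domination argument (from $Q_{1}\cdots Q_{n}=A\subseteq P_{i}$ and primeness of $P_{i}$ one gets some $Q_{j}\subseteq P_{i}$, and symmetrically), combined with the incomparability of the $G_{l}$, forces the graded skeletons of the two factorizations to coincide.

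For the non-graded part I would use the canonical description $A/gr(A)=\bigoplus_{r=1}^{k}\langle f_{r}(c_{r})\rangle$ from Theorem \ref{gr(I) product of primes => I product of primes}(v), where the exit-free cycles $c_{r}$ and the least-degree polynomials $f_{r}\in K[x]$ are determined by $A$. The key correspondence to establish is that each non-graded prime factor $P=gr(P)+\langle p(c_{r})\rangle$ corresponds to an irreducible factor $p$ of the associated $f_{r}$, and that the multiplicity with which $P$ occurs in $P_{1}\cdots P_{m}$ equals the multiplicity of $p$ in $f_{r}$; this rests on the identity $P^{n}=gr(P)+\langle p(c_{r})^{n}\rangle$, on the commutativity of ideal multiplication (Theorem \ref{LPAs are Multiplication rings}), and on the idempotence of the graded parts, which lets the graded contributions be absorbed so that the product of the type-(iii) factors sharing $c_{r}$ reassembles $gr(A)+\langle f_{r}(c_{r})\rangle$. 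Since $K[x]$ (equivalently $K[x,x^{-1}]$) is a unique factorization domain, the irreducible factorization of the canonical $f_{r}$ is unique, which pins down the non-graded primes together with their multiplicities. Combining this with the uniqueness of the graded skeleton yields equality of the two factorizations as multisets, hence $m=n$ and $\{P_{1},\dots,P_{m}\}=\{Q_{1},\dots,Q_{n}\}$, proving (a).

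Part (b) then follows quickly: if $A$ is graded, then $A=gr(A)$, so $A/gr(A)=0$ and there are no summands $\langle f_{r}(c_{r})\rangle$; by the correspondence above the factorization therefore contains no type-(iii) factor, so every $P_{j}$ is graded. Once all factors are graded, Proposition \ref{graded Ideal Property}(ii) gives $\prod_{j}P_{j}=\bigcap_{j}P_{j}$, whence $A=P_{1}\cap\cdots\cap P_{m}$. The main obstacle throughout is the correspondence in the third paragraph: faithfully translating products and powers of type-(iii) primes into the irreducible factorization of the canonical polynomials $f_{r}$. I expect the delicate points to be verifying $P^{n}=gr(P)+\langle p(c)^{n}\rangle$ (so that multiplicities transfer correctly), checking that distinct irreducible factors of $f_{r}$ yield distinct primes whose multiplicity-weighted product recovers $gr(A)+\langle f_{r}(c_{r})\rangle$, and confirming that the graded parts of all type-(iii) factors attached to a given cycle $c_{r}$ coincide with the single minimal prime over $gr(A)$ sitting at that cycle, so that the graded skeleton and the polynomial data do not interfere.
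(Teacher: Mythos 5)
A preliminary remark: this survey does not prove the theorem at all --- it is quoted from \cite{EER} --- so there is no in-paper proof to measure your attempt against. Judged on its own terms, your architecture is the right one and is consistent with how uniqueness is actually obtained in \cite{EER} and \cite{R3}: split an irredundant prime factorization into a graded skeleton, identify that skeleton intrinsically as the set of minimal primes over $gr(A)$ (using that products of graded ideals are intersections, Proposition \ref{graded Ideal Property}), and control the non-graded factors through the exit-free cycles and unique factorization in $K[x,x^{-1}]$. Your opening observations --- graded primes are idempotent, hence cannot repeat or form chains in an irredundant product, while the non-graded primes are exactly the ideals $I(H,B_{H})+\langle p(c)\rangle$ of Theorem \ref{R-1}(iii) and are the only factors that can repeat --- are correct and are the right starting points, as is the domination argument $Q_{1}\cdots Q_{n}\subseteq P_{i}\Rightarrow Q_{j}\subseteq P_{i}$.

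What you have written, however, is a plan rather than a proof, and the steps you defer are precisely where the content of the theorem lives; three of them are genuine gaps. First, you assert that the graded factors together with the graded parts of the type-(iii) factors are exactly the minimal primes over $gr(A)$. Irredundancy easily excludes two comparable graded factors, but it does not obviously exclude a graded factor $G$ strictly containing the graded part $gr(P)$ of a non-graded factor $P$: here $GP=G\cap P$ need not visibly collapse to either factor, so the ``skeleton equals minimal primes'' claim requires an argument you have not given. Second, the multiplicity transfer rests on the identity $P^{n}=gr(P)+\langle p(c)^{n}\rangle$ and on the claim that the product of all type-(iii) factors attached to a fixed cycle $c$ reassembles $gr(A)+\langle f(c)\rangle$ with $f$ the product of the corresponding irreducible polynomials; neither is proved, and both require computing products of non-graded ideals, which is the technical heart of the matter. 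Third, part (b) needs that a non-graded factor always leaves a nonzero trace in $A/gr(A)$; a priori the polynomial part $\langle p(c)\rangle$ of one factor could be absorbed by another factor containing $c^{0}$, yielding a graded product with a non-graded factor, and you must show that irredundancy forbids this rather than deducing it from a correspondence that has not yet been established. None of these claims is likely false, but each needs a real argument that the proposal does not supply.
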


From Proposition \ref{graded Ideal Property}(c), and the equivalence of
conditions (i) and (iv) of Theorem
\ref{gr(I) product of primes => I product of primes}, we derive following Proposition.

\begin{proposition}
If an ideal $I$ of $L$ is an intersection of finitely many prime ideals, then
$I$ is a product of (finitely many) prime ideals.
\end{proposition}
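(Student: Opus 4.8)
The plan is to pass to the graded part $gr(I)$, realize it as a product of graded prime ideals, and then transfer the factorization back to $I$ via the equivalence of (i) and (iv) in Theorem \ref{gr(I) product of primes => I product of primes}. So write $I=P_{1}\cap\cdots\cap P_{n}$ with each $P_{i}$ prime. The first thing I would establish is the identity $gr(I)=\bigcap_{i=1}^{n}gr(P_{i})$. This is the usual ``largest graded ideal'' argument: $gr(I)$ is a graded ideal contained in each $P_{i}$, hence contained in each $gr(P_{i})$ and therefore in $\bigcap_{i}gr(P_{i})$; conversely $\bigcap_{i}gr(P_{i})$ is a graded ideal contained in $\bigcap_{i}P_{i}=I$, hence contained in the largest graded ideal $gr(I)$ of $I$. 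Thus the two sides coincide.

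The second step, which I expect to carry the only genuine content, is to check that each $gr(P_{i})$ is again a (graded) prime ideal. Here I would appeal to the classification in Theorem \ref{R-1}. If $P_{i}$ is of type (i) or (ii) it is already graded, so $gr(P_{i})=P_{i}$ is prime. If $P_{i}$ is of the non-graded type (iii), say $P_{i}=I(H,B_{H})+\langle p(c)\rangle$, then its graded part is $gr(P_{i})=I(H,B_{H})$, and the hypothesis ``$v\geq u$ for all $v\in E^{0}\backslash H$'' attached to a type (iii) prime makes $E^{0}\backslash H$ downward directed (with $u$ serving as a common lower bound). Hence $I(H,B_{H})$ satisfies the condition in part (i) of Theorem \ref{R-1} and is itself prime. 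In every case, therefore, $gr(P_{i})$ is a graded prime ideal. This verification is the main obstacle, though a modest one, since it is a case analysis resting entirely on the structure theorem.

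The third step converts the intersection into a product. Since each $gr(P_{i})$ is graded, I would apply Proposition \ref{graded Ideal Property}(c) to the graded ideal $gr(I)=\bigcap_{i=1}^{n}gr(P_{i})$; using that $gr$ is idempotent on graded ideals this yields
\[
gr(I)=\bigcap_{i=1}^{n}gr(P_{i})=\prod_{i=1}^{n}gr(P_{i}),
\]
so by the second step $gr(I)$ is displayed as a product of graded prime ideals.

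Finally I would split on whether $I$ is graded. If $I$ is graded then $I=gr(I)=\prod_{i=1}^{n}gr(P_{i})$ is already exhibited as a product of prime ideals, and we are done. If $I$ is non-graded, then the factorization of $gr(I)$ just obtained is exactly condition (iv) of Theorem \ref{gr(I) product of primes => I product of primes}, so the implication (iv)$\Rightarrow$(i) gives that $I$ is a product of prime ideals. Apart from the prime-ness of $gr(P_{i})$, every step is a routine manipulation of graded parts, so once that point is secured the proposition follows immediately.
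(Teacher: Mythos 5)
Your proof is correct and follows essentially the same route as the paper, whose one-line argument likewise combines Proposition \ref{graded Ideal Property}(c) with the equivalence (i)$\Leftrightarrow$(iv) of Theorem \ref{gr(I) product of primes => I product of primes}. You merely make explicit a detail the paper leaves implicit, namely that the graded part of a type (iii) prime is again prime because the vertex $u$ on the exit-free cycle witnesses downward directedness of $E^{0}\backslash H$; that verification is accurate.
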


But a product of prime ideals in $L$ need not be an intersection of prime
ideals as the next example shows.

\begin{example}
If $E$ is the graph with a single vertex and a single loop
\[%
\raisebox{-0pt}{\includegraphics[
natheight=0.396100in,
natwidth=0.989300in,
height=0.4255in,
width=1.0222in
]%
{Loop.jpg}%
}
\]
then $L_{K}(E)\cong K[x,x^{-1}]$, the Laurent polynomial ring, induced by the
map $v\mapsto1$, $x\mapsto x$, $x^{\ast}\mapsto x^{-1}$. So it is enough to
find a ideal $A$ in $K[x,x^{-1}]$ with the desired property. Consider the
prime ideal $A=<p(x)>$ in $K[x,x^{-1}]$, where $p(x)$ is an irreducible
polynomial. We claim that $B=A^{2}$ is not an intersection of prime ideals in
$K[x,x^{-1}]$. Suppose, on the contrary, $B=%
{\displaystyle\bigcap\limits_{\lambda\in\Lambda}}
M_{\lambda}$ where $\Lambda$ is some (finite or infinite) index set and each
$M_{\lambda}$ is a (non-zero) prime ideal of $K[x,x^{-1}]$ and hence a maximal
ideal of the principal ideal domain $K[x,x^{-1}]$. Now there is a homomorphism
$\phi:R\longrightarrow%
{\displaystyle\prod\limits_{\lambda\in\Lambda}}
R/M_{\lambda}$ given by $r\mapsto(\cdot\cdot\cdot,r+M_{\lambda},\cdot
\cdot\cdot)$ with $\ker(\phi)=B$. Then $\bar{A}=\phi(A)\cong A/B\neq0$
satisfies $(\bar{A})^{2}=0$ and this is impossible since $%
{\displaystyle\prod\limits_{\lambda\in\Lambda}}
R/M_{\lambda}$, being a direct product of fields, does not contain any
non-zero nilpotent ideals.
\end{example}

The next Proposition is new and gives necessary and sufficient conditions
under which a product of prime ideals in a Leavitt path algebra is also an
intersection of prime ideals. This happens exactly when every ideal of $L$ is
a radical ideal.

\begin{proposition}
Let $E$ be an arbitrary graph and let $L:=L_{K}(E)$. Then the following
properties are equivalent:

(i) \ \ Every product of prime ideals in $L$ is an intersection of prime ideals;

(ii) \ The graph $E$ satisfies Condition (K);

(iii) Every ideal of $L$ is a radical ideal.
\end{proposition}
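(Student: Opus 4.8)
The plan is to prove the cycle of implications (ii) $\Rightarrow$ (iii) $\Rightarrow$ (i) $\Rightarrow$ (ii). The bridge throughout is the cited theorem that Condition (K) holds if and only if every ideal of $L$ is graded, equivalently if and only if every prime ideal of $L$ is graded, combined with the fact noted in the Remark above that every graded ideal equals its own radical.

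The two implications (ii) $\Rightarrow$ (iii) and (iii) $\Rightarrow$ (i) are short. For (ii) $\Rightarrow$ (iii): if $E$ satisfies Condition (K), then every ideal of $L$ is graded, and a graded ideal equals its radical, so every ideal is radical. For (iii) $\Rightarrow$ (i): let $J=P_{1}\cdots P_{n}$ be any product of prime ideals; then $J$ is an ideal, so by (iii) $J=Rad(J)$, and since $Rad(J)$ is by definition the intersection of all prime ideals containing $J$, this already displays $J$ as an intersection of prime ideals. No gradedness of the factors $P_{i}$ is needed here; alternatively, under Condition (K) all $P_{i}$ are graded and $\prod P_{i}=\bigcap P_{i}$ by Proposition \ref{graded Ideal Property}(ii), which gives (ii) $\Rightarrow$ (i) directly.

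The substantive direction is (i) $\Rightarrow$ (ii), which I would prove by contraposition. Assume $E$ fails Condition (K). Then some prime ideal $P$ is non-graded, and by Theorem \ref{R-1}(iii) it has the form $P=I(H,B_{H})+<p(c)>$, where $c$ is a cycle without exits based at a vertex of $E\backslash(H,B_{H})$ and $p$ is irreducible. I claim the product $P^{2}$ is not an intersection of prime ideals, which negates (i). First, any intersection of prime ideals is automatically radical: each prime in the intersection contains the intersection, so the radical, being the intersection of all primes containing the ideal, is contained in the ideal, while the reverse inclusion always holds. Hence it suffices to show $P^{2}$ is not radical. Since $P$ is prime, $Rad(P^{2})=Rad(P)=P$, so the task reduces to proving $P^{2}\neq P$.

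Establishing $P^{2}\neq P$ is the step I expect to be the main obstacle. The idea is to pass to the quotient $L/gr(P)$. Here $gr(P)=I(H,B_{H})$ is graded, so $gr(P)=gr(P)^{2}\subseteq P^{2}\subseteq P$, and under the isomorphism $L/gr(P)\cong L_{K}(E\backslash(H,B_{H}))$ the image of $P$ is the ideal generated by $p(\bar{c})$ on the exit-free cycle $\bar{c}$. On such an exit-free cycle the relevant corner behaves like the Laurent polynomial ring $K[x,x^{-1}]$ (Example \ref{Laurent}) under $\bar{c}\mapsto x$, so the images of $P$ and $P^{2}$ correspond to $<p(x)>$ and $<p(x)^{2}>$, which are distinct because $p(x)$ is a non-unit in the domain $K[x,x^{-1}]$. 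Consequently $P^{2}\neq P$; equivalently, since $p(x)^{2}$ is not a product of distinct irreducible polynomials, the characterization of radical ideals in Theorem \ref{semiprime ideals} shows directly that $P^{2}$ is not radical. This yields a product of prime ideals that is not an intersection of prime ideals, completing the contrapositive and hence the cycle.
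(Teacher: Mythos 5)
Your proof is correct, and for the substantive implication it ultimately rests on the same mechanism the paper uses: failure of Condition (K) yields a cycle $c$ without exits in a quotient graph, the ideal generated by $c^{0}$ is $M_{\Lambda}(K[x,x^{-1}])$, and the square of the corresponding prime fails to be radical because $p(x)^{2}$ is not square-free. The organization, however, is genuinely different and in places cleaner. The paper runs the cycle (i) $\Rightarrow$ (ii) $\Rightarrow$ (iii) $\Rightarrow$ (iv) $\Rightarrow$ (i), where (iv) is the auxiliary condition that every ideal of $L$ is graded; in particular its passage from (iii) back to (i) re-runs the non-radical-square argument to force all ideals to be graded and then invokes Proposition \ref{graded Ideal Property}. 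Your (iii) $\Rightarrow$ (i) is instead a one-line observation --- a product of primes is an ideal, hence equals its radical, which is by definition an intersection of primes --- and needs no gradedness at all. For the hard direction you argue contrapositively from a single non-graded prime $P=I(H,B_{H})+<p(c)>$ supplied by Theorem \ref{R-1}(iii), reduce via $Rad(P^{2})=Rad(P)=P$ to showing $P^{2}\neq P$, and settle that in $L/gr(P)$ using the multiplicativity of the ideal correspondence for $M_{\Lambda}(K[x,x^{-1}])$ (or, alternatively, Theorem \ref{semiprime ideals}); the paper instead argues by contradiction inside the ideal $M$ generated by $c^{0}$ and must transfer prime ideals between $M$, the quotient algebra, and $L$ using the local units of $M$. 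Both routes are sound; yours avoids that transfer at the cost of the (equally standard) facts that $gr(P)=gr(P)^{2}\subseteq P^{2}$ and that squaring commutes with the Morita correspondence, and it has the merit of making explicit why an intersection of primes must be radical, a point the paper leaves implicit.
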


\begin{proof}
Assume (i). Assume, by way of contradiction that the graph $E$ does not
satisfy Condition (K). Then, for some admissible pair $(H,S)$, the quotient
graph $E\backslash(H,S)$ does not satisfy Condition (L) (see \cite{AAS}) and
thus there is a cycle $c$ without exits in $E\backslash(H,S)$. By [\cite{AAS},
Lemma 2.7.1], the ideal $M$ of $L_{K}(E\backslash(H,S))$ generated by
$\{c^{0}\}$ is isomorphic to the matrix ring $M_{\Lambda}(K[x,x^{-1}])$  where
$\U{39b} $ some index set. Then [\cite{EEKR}, Proposition 1] and Example 4
above implies that, for any prime ideal $P$ of $M$, $P^{2}$ is not an
intersection of prime ideals of $M$. Since the graded ideal $M$ is a ring with
local units ([\cite{AAS}, Corollary 2.5.23]), every ideal (prime ideal) of $M$
is an ideal (prime ideal) of $L_{K}(E\backslash(H,S))$ and, for any prime
ideal $Q$ of $L_{K}(E\backslash(H,S))$, $M\cap Q$ is a prime ideal of $M$.
Consequently, $P^{2}$ be an intersection of prime ideals of $L_{K}%
(E\backslash(H,S))$. This is a contradiction, since $L_{K}(E\backslash(H,S))$,
being isomorphic to the quotient ring $L/I(H,S)$, satisfies (i). Consequently,
the graph $E$ must satisfy Condition (K), thus proving (ii).

Assume (ii). By [\cite{AAS}, Proposition 2.9.9], every ideal of $L$ is graded.
On the other hand if $I=I(H,S)$ is a graded ideal, then$L/I$ is isomorphic to
the Leavitt path algebra $L_{K}(E\backslash(H,S))$ (\cite{AAS}) \ and since
the prime radical (the intersection of all primes ideals of $L_{K}%
(E\backslash(H,S))$ is zero, $I$ is the intersection of all the prime ideals
containing $I$ and hence is a radical ideal. This proves (iii).

Assume (iii). We claim that every ideal of $L$ must be a graded ideal.
Suppose, by way of contradiction, there is a non-graded ideal $I$ in $L$, say,
$I=I(H,S)+%
{\displaystyle\sum\limits_{i\in Y}}
f_{i}(c_{i})$, where $Y$ is an index set and, for each $i\in Y$, $f_{i}(x)\in
K[x]$ and $c_{i}$ is a cycle without exits in $E\backslash(H,S)$. Now for a
fixed $i\in Y$ and an irreducible polynomial $p(x)\in K[x,x^{-1}]$,
$P=I(H,S)+<p(c_{i})>$  is a prime ideal and $\bar{P}=P/I(H,S)=<p(c_{i}%
)>\subseteqq M=<\{c_{i}^{0}\}>$. As noted in the proof of (i) =\U{21d2} (ii),
$P^{2}$ is not a radical ideal of $L/I(H,S)$ and hence $P^{2}$ is not a
radical ideal in $L$, a contradiction. Hence every ideal of $L$ is a graded
ideal. This proves (iv).

Now (iv) =\U{21d2} (i), by Proposition \ref{graded Ideal Property}(c).
\end{proof}

We end this section by considering the powers of an ideal in $L$. From
Proposition \ref{graded Ideal Property}, it is clear that if $A$ is a graded
ideal of $L$, then $A=A^{2}$ and so $A=A^{n}$ for all $n\geq1$. What happens
if $A$ is a non-graded ideal ? The next Proposition implies that, for such an
$A$, $A\neq A^{n}$ for any $n>1$.

\begin{proposition}
\label{Intersection of powers} (\cite{EER}) If $A$ is a non-graded ideal in
$L$, then $%
{\displaystyle\bigcap\limits_{n=1}^{\infty}}
A^{n}$ is a graded ideal, being equal to $gr(A)$.
\end{proposition}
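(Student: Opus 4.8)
The plan is to establish the two inclusions $gr(A)\subseteq\bigcap_{n\geq 1}A^{n}$ and $\bigcap_{n\geq 1}A^{n}\subseteq gr(A)$ separately; the first is immediate and the second is where the real work lies. For the easy inclusion, recall that $gr(A)$ is a graded ideal contained in $A$, so by Proposition \ref{graded Ideal Property}(i)(a) it is idempotent, whence $gr(A)=gr(A)^{n}\subseteq A^{n}$ for every $n\geq 1$ and therefore $gr(A)\subseteq\bigcap_{n}A^{n}$.

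For the reverse inclusion I would first reduce to the case $gr(A)=0$ by passing to the quotient $\bar{L}=L/gr(A)$. Writing $gr(A)=I(H,S)$, we have $\bar{L}\cong L_{K}(E\backslash(H,S))$, again a Leavitt path algebra. The image $\bar{A}=A/gr(A)$ is a non-zero ideal of $\bar{L}$ whose graded part is zero, since any graded ideal of $\bar{L}$ lying inside $\bar{A}$ pulls back to a graded ideal of $L$ sandwiched between $gr(A)$ and $A$, hence equal to $gr(A)$. Because the canonical surjection $\phi$ satisfies $\phi(A^{n})=\bar{A}^{n}$ and $gr(A)\subseteq A^{n}$ by the previous paragraph, the quotient $(\bigcap_{n}A^{n})/gr(A)$ embeds into $\bigcap_{n}\bar{A}^{n}$. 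Thus it suffices to prove $\bigcap_{n}\bar{A}^{n}=0$ in $\bar{L}$, which is precisely the statement for the non-graded ideal $\bar{A}$ with $gr(\bar{A})=0$.

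So assume $gr(A)=0$, i.e. $H=A\cap E^{0}=\varnothing$ and $S=\varnothing$. By the generating-set theorem of \cite{R2}, $A$ is then generated by a family $\{f_{t}(c_{t}):t\in T\}$, where the $c_{t}$ are cycles without exits based at distinct vertices $v_{t}$ and each $f_{t}$ has non-zero constant term; here $\deg f_{t}\geq 1$, for otherwise $f_{t}(c_{t})$ would be a scalar multiple of $v_{t}$, forcing $v_{t}\in A\cap E^{0}=\varnothing$. Being exitless, distinct cycles are vertex-disjoint, so the graded ideals $M_{t}=\langle v_{t}\rangle$ have pairwise disjoint hereditary saturated vertex supports $c_{t}^{0}$ and hence $M_{t}\cap M_{t'}=0$ for $t\neq t'$; by Proposition \ref{graded Ideal Property}(i)(a) this intersection equals $M_{t}M_{t'}$, so the subideals $\langle f_{t}(c_{t})\rangle\subseteq M_{t}$ annihilate one another across distinct indices. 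Consequently all cross terms drop out, $A^{n}=\sum_{t}\langle f_{t}(c_{t})\rangle^{n}$ is a sum with pairwise-disjoint supports, and intersecting over $n$ support-by-support reduces the claim to showing $\bigcap_{n}\langle f(c)\rangle^{n}=0$ for a single exitless cycle $c$.

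For that single-cycle computation I would invoke Lemma 2.7.1 of \cite{AAS}: the graded ideal $M=\langle c^{0}\rangle\cong M_{\Lambda}(K[x,x^{-1}])$, under which $v$ maps to a diagonal matrix unit and $f(c)=vf(c)v$ maps to $f(x)$ times that matrix unit. Since $M$ has local units, $\langle f(c)\rangle$ is exactly the ideal of $M$ generated by this element, namely $M_{\Lambda}(\langle f(x)\rangle)$, and therefore $\langle f(c)\rangle^{n}\cong M_{\Lambda}(\langle f(x)\rangle^{n})=M_{\Lambda}(\langle f(x)^{n}\rangle)$. As $f$ has non-zero constant term and positive degree it is a non-unit of the principal ideal domain $K[x,x^{-1}]$, so Krull's intersection theorem gives $\bigcap_{n}\langle f(x)^{n}\rangle=0$; passing this through the matrix identification yields $\bigcap_{n}\langle f(c)\rangle^{n}=0$ and finishes the argument. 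The main obstacle I anticipate is exactly this local step: pinning down $\langle f(c)\rangle$ as $M_{\Lambda}(\langle f\rangle)$ and verifying that forming powers commutes with the matrix construction ($M_{\Lambda}(I)^{n}=M_{\Lambda}(I^{n})$), together with the bookkeeping that lets the multi-cycle intersection be evaluated support-by-support. Everything else, namely the idempotence of graded ideals, the quotient reduction, and Krull's theorem in $K[x,x^{-1}]$, is routine once this structure is in hand.
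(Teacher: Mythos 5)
The survey does not actually prove this proposition; it simply quotes it from \cite{EER}, so there is no in-paper argument to compare against. Judged on its own terms, your proof is correct and follows what is essentially the standard route for this result (and, as far as I can tell, the route taken in \cite{EER}): the easy inclusion $gr(A)\subseteq\bigcap_{n}A^{n}$ via idempotence of graded ideals, reduction modulo $gr(A)$ to the case $H=S=\varnothing$ using $L/I(H,S)\cong L_{K}(E\backslash(H,S))$, the orthogonal generating set $\{f_{t}(c_{t})\}$ from \cite{R2}, and the identification $\langle c^{0}\rangle\cong M_{\Lambda}(K[x,x^{-1}])$ from Lemma 2.7.1 of \cite{AAS}, which the survey itself invokes elsewhere. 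Two points deserve slightly more care than you give them. First, the vertex support of $M_{t}=\langle v_{t}\rangle$ is the hereditary \emph{saturated} closure of $c_{t}^{0}$, not $c_{t}^{0}$ itself; these closures are still pairwise disjoint (a short induction on the saturation stages, using that each vertex of an exitless cycle emits exactly one edge), and then $M_{t}\cap M_{t'}=I(\varnothing)=0$, which is what you actually need. Second, orthogonality of the generators $f_{t}(c_{t})f_{t'}(c_{t'})=0$ does not by itself kill the ideal product $Lf_{t}(c_{t})Lf_{t'}(c_{t'})L$; the correct argument is the one you sketch, namely $\langle f_{t}(c_{t})\rangle\subseteq M_{t}$ (since $f_{t}(c_{t})=v_{t}f_{t}(c_{t})v_{t}$) together with $M_{t}M_{t'}\subseteq M_{t}\cap M_{t'}=0$, and the same disjointness gives the directness of $\sum_{t}\langle f_{t}(c_{t})\rangle^{n}$ needed to intersect support-by-support. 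With those details filled in, the final step --- $\langle f(c)\rangle^{n}=M_{\Lambda}(\langle f(x)^{n}\rangle)$ and $\bigcap_{n}\langle f(x)^{n}\rangle=0$ because $f$ is a nonzero non-unit of the PID $K[x,x^{-1}]$ --- is sound, so the argument goes through.
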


As a corollary we obtain,

\begin{corollary}
An ideal $A$ of $L$ is a graded ideal if and only if $A=$ $A^{n}$ for all
$n\geq1$.
\end{corollary}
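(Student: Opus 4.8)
The plan is to deduce both implications directly from the two propositions already established just above, since the corollary is essentially a repackaging of them. The forward direction is immediate: if $A$ is a graded ideal, then Proposition \ref{graded Ideal Property}(i)(a) gives $A^{2}=A$, and a trivial induction on $n$ (using $A^{n}=A\cdot A^{n-1}=A\cdot A=A$) yields $A^{n}=A$ for every $n\geq1$. No graph-theoretic analysis is needed here.

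For the converse, I would argue by contraposition. Suppose $A$ is \emph{not} a graded ideal; the goal is to exhibit some $n>1$ with $A^{n}\neq A$. Since $A$ is non-graded, Proposition \ref{Intersection of powers} applies and gives
\[
\bigcap_{n=1}^{\infty}A^{n}=gr(A).
\]
Because $gr(A)$ is by definition the largest graded ideal contained in $A$ and $A$ itself is non-graded, we have the strict containment $gr(A)\subsetneq A$. Hence the intersection of all the powers $A^{n}$ is strictly smaller than $A$, which forces $A^{n}\neq A$ for at least one (indeed infinitely many) $n\geq1$. Contrapositively, if $A=A^{n}$ for all $n\geq1$, then $\bigcap_{n}A^{n}=A$, so $gr(A)=A$ and $A$ is graded.

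There is no real obstacle in this argument: the entire content sits in Proposition \ref{Intersection of powers}, whose proof is assumed. The only point requiring a word of care is the observation that $gr(A)\subsetneq A$ precisely when $A$ is non-graded, which is immediate from the definition of $gr(A)$ as the largest graded ideal inside $A$ (so $gr(A)=A$ if and only if $A$ is graded). Combining the two implications completes the biconditional.
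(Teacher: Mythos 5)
Your proof is correct and follows exactly the route the paper intends: the forward direction from Proposition \ref{graded Ideal Property}(i)(a) ($A^{2}=A$ plus induction), and the converse from Proposition \ref{Intersection of powers} together with the observation that $gr(A)\subsetneq A$ for a non-graded $A$. No issues.
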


W. Krull showed that if $A$ is an ideal of a commutative noetherian ring with
identity $1$, then $%
{\displaystyle\bigcap\limits_{n=1}^{\infty}}
A^{n}=0$ if and only if $1-x$ is not a zero divisor for all $x\in A$ (see
Theorem 12, Section 7 in \cite{ZS}). As an consequence of Proposition
\ref{Intersection of powers}, we obtain an analogue of Krull's theorem for
Leavitt path algebras.

\begin{corollary}
(\cite{EER}) Let $A$ be an arbitrary ideal of $L$. Then $%
{\displaystyle\bigcap\limits_{n=1}^{\infty}}
A^{n}=0$ if and only if\ $A$ contains no vertices of the graph $E$.
\end{corollary}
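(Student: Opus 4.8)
The plan is to reduce the statement to the single identity $\bigcap_{n=1}^{\infty}A^{n}=gr(A)$, valid for \emph{every} ideal $A$, and then to identify exactly when the graded ideal $gr(A)$ vanishes. Since ``$A$ contains no vertices'' means precisely $A\cap E^{0}=\emptyset$, the corollary follows once I show that $gr(A)=0$ if and only if $A\cap E^{0}=\emptyset$.

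First I would establish $\bigcap_{n}A^{n}=gr(A)$ by a case split. If $A$ is non-graded, this is exactly Proposition \ref{Intersection of powers}. If $A$ is graded, then $A=gr(A)$, and by Proposition \ref{graded Ideal Property}(i)(a), which gives $A^{2}=A$ and hence $A=A^{n}$ for all $n\geq 1$ (equivalently, by the corollary to Proposition \ref{Intersection of powers}), we obtain $\bigcap_{n}A^{n}=A=gr(A)$. Thus in both cases $\bigcap_{n}A^{n}=gr(A)$, so $\bigcap_{n}A^{n}=0$ if and only if $gr(A)=0$.

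The core of the argument is the equivalence $gr(A)=0 \iff A\cap E^{0}=\emptyset$. Write $H=A\cap E^{0}$. Since every vertex lies in the homogeneous component $L_{0}$, a vertex belongs to $A$ if and only if it belongs to $gr(A)=\bigoplus_{n}(A\cap L_{n})$; hence $gr(A)\cap E^{0}=A\cap E^{0}=H$, and if $gr(A)=0$ then trivially $H=\emptyset$. For the converse, suppose $H=\emptyset$. By Tomforde's Theorem \ref{graded Ideal}, the graded ideal $gr(A)$ has the form $I(H,S)$ with $S=\{u\in B_{H}:u^{H}\in gr(A)\}$. Here I would verify the key fact that $B_{\emptyset}=\emptyset$: a breaking vertex of $\emptyset$ would be an infinite emitter $w$ with $0<|s^{-1}(w)\cap r^{-1}(E^{0})|<\infty$, but $r^{-1}(E^{0})=E^{1}$ forces $|s^{-1}(w)|<\infty$, contradicting that $w$ is an infinite emitter. (One also checks that $\emptyset$ is hereditary and saturated, both holding vacuously.) Hence $S\subseteq B_{\emptyset}=\emptyset$ and $gr(A)=I(\emptyset,\emptyset)=0$.

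The only genuinely delicate point is this last translation: that an empty vertex set forces the graded ideal to be zero. The subtlety is that, a priori, $gr(A)$ might still contain elements built from breaking-vertex idempotents $v^{H}$; the computation $B_{\emptyset}=\emptyset$ is exactly what rules this out and pins down $gr(A)=I(\emptyset,\emptyset)=0$. Once this is in hand, combining it with $\bigcap_{n}A^{n}=gr(A)$ yields $\bigcap_{n=1}^{\infty}A^{n}=0 \iff A\cap E^{0}=\emptyset$, which is the assertion.
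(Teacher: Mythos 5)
Your proof is correct and follows exactly the route the paper indicates: it combines Proposition \ref{Intersection of powers} (together with $A=A^{n}$ for graded $A$) with the observation that a graded ideal is zero precisely when it contains no vertices. The paper leaves the latter step implicit; your verification via Theorem \ref{graded Ideal} and the computation $B_{\emptyset}=\emptyset$ is the right way to fill it in.
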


\begin{acknowledgement}
My thanks to Gene Abrams for carefully reading this article, making
corrections and offering suggestions.
\end{acknowledgement}

\end{document}